\numberwithin{equation}{section}
\DeclareFontFamily{OMX}{yhex}{}
\DeclareFontShape{OMX}{yhex}{m}{n}{<->yhcmex10}{}
\DeclareSymbolFont{yhlargesymbols}{OMX}{yhex}{m}{n}
\DeclareMathAccent{\wideparen}{\mathord}{yhlargesymbols}{"F3}
\DeclareMathOperator{\area}{Area}
\newcommand{\Addresses}{{% additional braces for segregating \footnotesize
		\bigskip
		\footnotesize
	
		\noindent Guangming Hu, \href{18810692738@163.com}{18810692738@163.com}
		\newline\textit{ College of Science, Nanjing University of Posts and Telecommunications,
			Nanjing, 210003, P.R. China.}\par\nopagebreak
		\medskip
		\noindent Ziping Lei, \href{zplei@ruc.edu.cn}{zplei@ruc.edu.cn}
		\newline\textit{ School of Mathematics, Renmin University of China, Beijing, 100872, P.R. China.} \par\nopagebreak
		\medskip
	\noindent Yanlin Li, \href{liyl@hznu.edu.cn}{liyl@hznu.edu.cn}
		\newline\textit{ School of Mathematics, Hangzhou Normal University, Hangzhou, 311121, P.R. China.} \par\nopagebreak
		\medskip
		\noindent Hao Yu, \href{b453@cnu.edu.cn}{b453@cnu.edu.cn}
		\newline\textit{ Academy for multidisciplinary studies, Capital Normal University, Beijing, 100048, P.R.China} }}
\title{ Boundary Value Problem and Discrete Schwarz-Pick Lemma for Generalized Hyperbolic Circle Packings}
\author{ Guangming Hu, Ziping Lei, Yanlin Li and Hao Yu}
\date{}
\newtheorem{theorem}{Theorem}[section]
\newtheorem{lemma}[theorem]{Lemma}
\newtheorem{proposition}[theorem]{Proposition}
\newtheorem{corollary}[theorem]{Corollary}
\theoremstyle{definition}
\newtheorem{definition}[theorem]{Definition}
\newtheorem{remark}[theorem]{Remark}
\newcommand{\pp}[2]{\frac{\partial#1}{\partial#2}}
\begin{document}
	%\thanks{This work is partially supported by NSFC Grant Number: 12101275.}
	\maketitle
	
	\begin{abstract}
		
	In 1991,  Beardon and Stephenson \cite{be} generalized the classical Schwarz-Pick lemma in hyperbolic geometry to the discrete Schwarz-Pick lemma for Andreev circle packings.  This paper continues to investigate the discrete Schwarz-Pick lemma for generalized circle packings (including circle, horocycle or hypercycle)  in hyperbolic background geometry. Since the discrete Schwarz-Pick lemma is to compare some geometric quantities of two generalized circle packings with different boundary values, we first show the existence and rigidity of generalized circle packings with boundary values, and then  we introduce the method of combinatorial Calabi flows to find the generalized circle packings with  boundary values.  Moreover, motivated by the method of He \cite{he},  %To find this generalized circle packing, we construct combinatorial Calabi flows for prescribed total geodesic curvatures and analysis the long-time existence and convergence of the solution to the flows. Motivated by \cite{be,he,ro},
 we propose the maximum principle for generalized circle packings. Finally, we use the maximum principle to prove the discrete Schwarz-Pick lemma for generalized circle packings. 
	
		\medskip
		\noindent\textbf{Mathematics Subject Classification (2020)}: 52C25, 52C26, 53A70.
	\end{abstract}

	\section{Introduction}

Discrete conformal structures on polyhedral surfaces serve as discrete analogues of conformal structures on smooth surfaces. Thurston \cite{thurston} used the circle packings on triangulated closed surfaces to investigate hyperbolic structures on 3-manifolds, building a deep connection between circle packings and conformal structures.

Over the past few decades, the variational principle, introduced by Colin de Verdière \cite{colin} in 1991, plays an important role in establishing the existence and rigidity of certain types of circle patterns. Further work in this area includes contributions by Bobenko and Springborn \cite{bo}, Leibon \cite{Leibon}, Rivin \cite{ri}, and Luo \cite{Luo13, Luo}.

In spherical background geometry, the functionals developed by Colin de Verdière and others are non-convex, which limits the applicability of the variational principle. This restriction led to a long period of stagnation in the use of variational methods in spherical background geometry. However, Nie \cite{nie} recently introduced a new functional with total geodesic curvatures, which is convex in spherical background geometry. Using the functional, he proved the existence and rigidity of circle patterns in spherical background geometry with prescribed total geodesic curvatures at each vertex. In \cite{GHLY}, we gave an explanation on Nie's functional as follows. It is not difficult to see that  the  version of total geodesic curvatures in Euclidean background geometry is equivalent to the case of conical angles. In \cite{nie2}, Nie rewrote Thurston-Andreev theorem that the value range of conical angles in Euclidean background geometry is the boundary of a convex polyhedron in Euclidean space $\mathbb{R}^V$.  Nie's spherical work can be seen as extending from the boundary of this convex polyhedron to its interior as the Gaussian curvature changes from 0 to 1. Moreover, there exists a connection between the convex functional involving total geodesic curvature in spherical background geometry and that in Euclidean background geometry.

 As a generalization, the second author of this paper \cite{BHS} explored the existence and rigidity of generalized hyperbolic circle packings with conical singularities on triangulated compact surfaces, which can include circles, horocycles, and hypercycles, for given total geodesic curvatures at each vertex.  It is another extension with the Gaussian curvature changing from 0 to -1.

This paper is the continuation of generalized hyperbolic circle packings in \cite{ BHS}.

Beardon and Stephenson \cite{be} studied boundary value problems for circle packings by using regular family methods. Inspired by their work, we pose the following\\

\noindent{\bf  Boundary value problem: }\; {\em Is there the generalized circle packing with prescribed boundary geodesic curvatures and internal total geodesic curvatures on $S_{g,n}$?  How to find it?}

We obtain the following theorem that solves the existence and rigidity of this problem:

\begin{theorem}\label{thm}
	Given a closed topological surface $S_{g,n}$ with a triangulation $\mathcal{T} = (V, E, F)$ as in subsection \ref{GCPS}, for any prescribed geodesic curvatures $\hat{k} \in \mathbb{R}_{>0}^{|V^{\partial}|}$ and prescribed total geodesic curvatures $\hat{T} \in \mathbb{R}^{|V^{\circ}|}_{>0}$, there exists a generalized circle packing $P$ on $S_{g,n}$ that realizes the pair $(\hat{k}, \hat{T})$ if and only if the prescribed total geodesic curvature $\hat{T} \in \mathfrak{T}$, where
	\begin{equation}
		\mathfrak{T} = \left\{ (T_1, \ldots, T_{|V^{\circ}|}) \in \mathbb{R}^{|V^{\circ}|}_{>0} \ \middle|\ \sum_{v \in I} T_v < \pi |F_I|, \ \forall I \subset V^{\circ} \right\}.
	\end{equation}
	Furthermore, the generalized circle packing $P$, if it exists, is unique up to isometry.
\end{theorem}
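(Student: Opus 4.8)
The plan is to follow the variational approach underlying the existence-and-rigidity results for circle patterns, adapting it to the mixed boundary condition in which boundary vertices carry prescribed geodesic curvatures $\hat{k}$ and interior vertices carry prescribed total geodesic curvatures $\hat{T}$. First I would fix the boundary data: prescribing $\hat{k}_v$ for each $v \in V^{\partial}$ pins down the radius (equivalently, the generalized-circle type and scale) of each boundary vertex. This leaves the interior radii as the free unknowns, which I would encode by a conformal-type variable $u \in \mathbb{R}^{|V^{\circ}|}$ chosen so that the admissible configuration space is an open connected set $\Omega \subset \mathbb{R}^{|V^{\circ}|}$ and each interior total geodesic curvature becomes a smooth function $T_v(u)$ of the interior variables.

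The second step is the variational core. Building on the convex functional of Nie and its hyperbolic analogue established in \cite{BHS}, I would exhibit an energy $\mathbb{E}\colon \Omega \to \mathbb{R}$ whose gradient is $\nabla \mathbb{E}(u) = \bigl(T_v(u) - \hat{T}_v\bigr)_{v \in V^{\circ}}$, so that critical points of $\mathbb{E}$ are exactly the packings realizing $(\hat{k}, \hat{T})$. The key analytic input is that the Jacobian $\partial T_v / \partial u_w$ is symmetric and positive definite on $\Omega$ (a discrete Laplacian with positive conductances), which makes $\mathbb{E}$ strictly convex. Rigidity is then immediate: a strictly convex function has at most one critical point, so the interior radii, and hence the induced hyperbolic metric on the triangulated surface, are uniquely determined; the packing as a geometric object is therefore unique up to isometry, since the developing map of a fixed metric is unique up to an ambient isometry of the hyperbolic plane.

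For existence I would determine the image of the curvature map $T(\cdot)\colon \Omega \to \mathbb{R}^{|V^{\circ}|}_{>0}$ and show it equals $\mathfrak{T}$. The necessity of $\hat{T} \in \mathfrak{T}$ is a packing estimate: summing the angle contributions over the faces $F_I$ incident to any interior subset $I$ and using that each triangle contributes strictly less than $\pi$ to the relevant angle sum yields $\sum_{v \in I} T_v < \pi |F_I|$. For sufficiency, strict convexity reduces the problem to showing that $\mathbb{E}$ is proper (coercive) precisely when $\hat{T} \in \mathfrak{T}$: I would analyze every degeneration of $u$ toward $\partial \Omega$ (radii tending to the horocyclic and hypercyclic limits, or whole subsets of interior vertices blowing up together) and show that along any escaping sequence the linear term $\langle \hat{T}, u \rangle$ eventually dominates $\mathbb{E}$, forcing $\mathbb{E} \to +\infty$ exactly when the strict inequalities defining $\mathfrak{T}$ hold. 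A minimizer then exists in the interior $\Omega$ and provides the desired packing.

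The main obstacle is this last boundary analysis. Unlike the closed-surface case, the mixed boundary condition forces the limiting curvature contributions to be computed with the boundary radii held fixed, and the three-fold family of generalized circles (circle, horocycle, hypercycle) makes the degenerate limits of $T_v(u)$ genuinely case-dependent. Organizing these limits so that the sharp combinatorial threshold $\pi |F_I|$ emerges for each subset $I \subset V^{\circ}$ — matching exactly the description of $\mathfrak{T}$ — is where the real work lies; the convexity and rigidity pieces are comparatively routine once the functional from \cite{BHS} is in hand.
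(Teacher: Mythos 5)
Your plan follows the same variational skeleton as the paper's proof: fix the boundary curvatures $\hat{k}$, take $s_v=\ln k_v$ at interior vertices as the free variables (so the domain is all of $\mathbb{R}^{|V^{\circ}|}$, with no admissibility constraints), build a potential $\mathcal{E}$ with $\nabla\mathcal{E}=(T_1,\dots,T_{|V^{\circ}|})$ from closed $1$-forms on the faces, and deduce rigidity from strict convexity — the paper gets positive definiteness of the Jacobian $M=(\partial T_i/\partial s_j)$ by showing it is symmetric and strictly diagonally dominant, face by face, via Lemma 2.11 of \cite{BHS} and Gauss--Bonnet. The necessity of $\hat{T}\in\mathfrak{T}$ is also identical in both arguments: $\operatorname{Area}(\Omega_f)=\pi-\sum_{v\in V_f}T_v^f>0$, summed over $F_I$. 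Where you genuinely diverge is the sufficiency step. You propose coercivity: show $\mathcal{E}$ is proper exactly when $\hat{T}\in\mathfrak{T}$ and take a minimizer. The paper instead shows $\nabla\mathcal{E}$ is an open embedding (strict convexity plus Brouwer's invariance of domain) and that any sequence $s^n$ escaping to $\partial\mathbb{R}^{|V^{\circ}|}$ has image converging to $\partial\mathfrak{T}$, using the qualitative limits $\sum_{v\in V_f\cap W_1}T_v^{f,n}\to\pi$ and $T_v^{f,n}\to 0$ from Lemma 2.12 of \cite{BHS}; together these force the image to be all of $\mathfrak{T}$. Both routes rest on the same degeneration lemma, but yours must additionally convert those limits into a positive lower bound for the directional derivative $\langle T(s_0+te)-\hat{T},\,e\rangle$ along every ray (this is precisely where the strict inequalities $\sum_{v\in I}\hat{T}_v<\pi|F_I|$ enter) and then invoke the fact that a convex function tending to $+\infty$ along every ray is proper; the paper's route avoids any coercivity estimate at the cost of a topological argument. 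Your proposal is sound, and you have correctly located the real work in the boundary degeneration analysis, which in the paper is outsourced to Lemma 2.12 of \cite{BHS}.
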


In 1982, Hamilton \cite{Hamilton} introduced the Ricci flow, which Perelman \cite{Perelman1, Perelman2, Perelman3} later used to prove the Poincaré conjecture. In 2003, Chow and Luo \cite{chow-Luo} introduced the combinatorial Ricci flow, which they used to find circle patterns with prescribed curvature in Euclidean and hyperbolic background geometries. Ge, in his Ph.D. thesis \cite{Ge1}, introduced the combinatorial Calabi flow to study circle packings. Significant results on combinatorial curvature flows have since emerged \cite{ke2, ke1,  Ge1, gehua,  ge3, ge4,ge8, gejsh, ge6, ge7, ge9}, including a notable contribution by Feng-Ge-Hua \cite{ke2,ke1}, which impacted 3-dimensional geometric topology through Luo’s 3-dimensional combinatorial Ricci flow \cite{Luo05}, leading to breakthroughs in Thurston’s “geometric ideal triangulation conjecture.” The methodological innovations and proof strategies introduced by Chow-Luo \cite{chow-Luo} and developed by Ge and his collaborators \cite{Ge1,Ge18,gehua, ge4,gejsh} have established a foundational framework and essential techniques in this field. It is no exaggeration to say that, in recent years, their studies have profoundly influenced in the research on circle packings and combinatorial curvature flows, shaping the development and direction of these areas.

In this paper, we will introduce the combinatorial Calabi flow to find  desired generalized  circle packings  with boundary values. 

Given geodesic curvatures $k=(k_1,\cdots,k_{|V^{\partial}|})\in\mathbb{R}_{>0}^{|V^{\partial}|}$ and prescribed total geodesic curvatures $\hat{T}=(\hat{T}_1,\cdots,\hat{T}_{|V^{\circ}|})\in\mathfrak{T}$, for geodesic curvatures $k=(k_1,\cdots,k_{|V^{\circ}|})\in\mathbb{R}_{>0}^{|V^{\circ}|}$, we define the  combinatorial Calabi flows  to find generalized circle packings with  boundary values:
\begin{definition}(combinatorial Calabi flows)
	\begin{equation}\label{f1}
		\frac{dk}{dt}=-KM'(T-\hat{T}),
	\end{equation}
	where $K=diag\{k_1,\cdots,k_{|V^{\circ}|}\}$, $M$ is the Jacobi matrix (\ref{ja}) and $M'$ is the transposition of $M$.
\end{definition}

We studied the long-time existence and convergence of the solution to combinatorial Calabi flows:
\begin{theorem}\label{t1}
	For any initial geodesic curvature $k(0)\in\mathbb{R}_{>0}^{|V^{\circ}|}$, the solution to the combinatorial Calabi flow (\ref{f1}) exists for all time $t\in[0,+\infty)$ and is unique.
\end{theorem}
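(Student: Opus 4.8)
The plan is to realize the flow (\ref{f1}) as a smooth negative-gradient flow on the whole of $\mathbb{R}^{|V^{\circ}|}$ and then to trap the trajectory inside a compact sublevel set of a coercive convex energy, so that no finite-time escape is possible. First I would introduce the logarithmic change of coordinates $u_i = \log k_i$, which sends the positivity constraint $k \in \mathbb{R}_{>0}^{|V^{\circ}|}$ to the full space $u \in \mathbb{R}^{|V^{\circ}|}$. Since $K = \mathrm{diag}\{k_1, \dots, k_{|V^{\circ}|}\}$ is exactly the Jacobian of this change of variables, and writing $M = \partial T / \partial u$ for the Jacobi matrix (\ref{ja}), equation (\ref{f1}) becomes
\begin{equation}
	\frac{du}{dt} = -M'(T - \hat{T}).
\end{equation}
The right-hand side is a smooth (indeed real-analytic) vector field on $\mathbb{R}^{|V^{\circ}|}$, because the total geodesic curvatures $T$ depend smoothly on the geometry and hence on $u$. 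By the Picard--Lindel\"{o}f theorem there is a unique maximal solution on some interval $[0, T_{\max})$; this already yields local existence and uniqueness, and after pulling back through $k = e^u$ it remains only to prove $T_{\max} = +\infty$.

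The crux is to exhibit a Lyapunov function with compact sublevel sets. Let $\mathbb{E}$ be the convex Ricci-type potential furnished by the variational principle behind Theorem \ref{thm}, characterised by $\nabla_u \mathbb{E} = T$ and $\mathrm{Hess}_u\, \mathbb{E} = M$; the symmetry $\partial T_v/\partial u_w = \partial T_w/\partial u_v$ is what makes such a potential exist, and the rigidity part of Theorem \ref{thm} makes $M$ symmetric and positive definite. Set $\widetilde{\mathbb{E}}(u) = \mathbb{E}(u) - \langle \hat{T}, u\rangle$, so that $\nabla_u \widetilde{\mathbb{E}} = T - \hat{T}$ and $\mathrm{Hess}_u\, \widetilde{\mathbb{E}} = M$. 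Along the flow,
\begin{equation}
	\frac{d}{dt}\widetilde{\mathbb{E}}(u(t)) = \langle \nabla \widetilde{\mathbb{E}},\, \dot{u}\rangle = -\langle T - \hat{T},\, M(T - \hat{T})\rangle \le 0,
\end{equation}
so $\widetilde{\mathbb{E}}$ is non-increasing and the trajectory stays inside the sublevel set $\{\widetilde{\mathbb{E}} \le \widetilde{\mathbb{E}}(u(0))\}$.

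It then suffices to show this set is compact, that is, that $\widetilde{\mathbb{E}}$ is proper. This is precisely where the hypothesis $\hat{T} \in \mathfrak{T}$ enters: the defining inequalities $\sum_{v \in I} T_v < \pi |F_I|$ describe the interior of the range of the curvature map $T$, and the same coercivity estimate used to produce the minimiser in Theorem \ref{thm} shows that $\widetilde{\mathbb{E}}$ is coercive exactly when $\hat{T}$ lies in this open region. Granting coercivity, $\{\widetilde{\mathbb{E}} \le \widetilde{\mathbb{E}}(u(0))\}$ is closed and bounded, hence compact; thus $u(t)$ cannot leave every compact subset of $\mathbb{R}^{|V^{\circ}|}$ as $t \to T_{\max}$, and the escape criterion for ordinary differential equations forces $T_{\max} = +\infty$. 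Transferring back by $k = e^u$ gives the global solution of (\ref{f1}), and uniqueness of the maximal trajectory gives its uniqueness.

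I expect the coercivity (properness) of $\widetilde{\mathbb{E}}$ to be the main obstacle, since it is the step that genuinely uses the geometry of the generalized packings---circles, horocycles and hypercycles---through the description of the admissible region $\mathfrak{T}$, and it is delicate precisely because $\nabla\widetilde{\mathbb{E}} = T - \hat{T}$ is bounded, so coercivity cannot come from superlinear growth but must be extracted from the facet inequalities cutting out $\mathfrak{T}$. If one prefers a self-contained argument bypassing the potential, the same conclusion follows from a uniform speed bound: $T(u)$ is confined to the fixed box $\prod_{v} (0, \pi|F_{\{v\}}|)$, so $|T - \hat{T}|$ is bounded, and once the entries $\partial T_v/\partial u_w$ of $M$ are shown to be uniformly bounded across all three curvature regimes (as $u_w \to \pm\infty$), the speed $|du/dt| = |M'(T - \hat{T})|$ is bounded, $|u(t)|$ grows at most linearly, and finite-time blow-up is again excluded.
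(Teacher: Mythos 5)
Your proposal is correct in outline, but your primary route differs from the paper's. The paper proves Theorem \ref{t1} by the ``self-contained'' alternative you sketch at the very end: after the change of variables $s_i=\ln k_i$ it establishes a uniform speed bound $|ds_i/dt|\le (4+C)(\pi d+\|\hat T\|_\infty)d$ by explicitly bounding every entry $\partial T_j^f/\partial s_i$ of $M$ (via the estimate $|\partial T_j^f/\partial k_i|<\tfrac{2(k_f^2-1)}{k_f(k_f^2-1+k_i^2)}$ from Lemma 2.8 of \cite{HHSZ}, Gauss--Bonnet, and the boundedness of $\partial\operatorname{Area}(\Omega_f)/\partial s_i$ from Theorem 4.2 of \cite{HHSZ}) and the trivial bound $|T_i|\le\pi d$; long-time existence then follows from the ODE extension theorem. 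Note that this argument uses only $\|\hat T\|_\infty<\infty$, not $\hat T\in\mathfrak T$. Your main route --- trapping the trajectory in a compact sublevel set of the Lyapunov function $\widetilde{\mathbb E}=\Theta$ --- is also valid, and in fact is exactly the machinery the paper deploys later for the convergence statement (Theorem \ref{Calabi flow}): there it asserts that $\Theta$, being strictly convex with the critical point $\hat s=(\nabla\mathcal E)^{-1}(\hat T)$, is proper. The coercivity you flag as the ``main obstacle'' is therefore less delicate than you fear: a $C^1$ strictly convex function on $\mathbb R^{n}$ with a critical point is automatically proper (restrict to rays from the minimizer and use compactness of the unit sphere), and the existence of the critical point is precisely what $\hat T\in\mathfrak T$ buys via Theorem \ref{thm1}. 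The trade-off is that your route needs $\hat T\in\mathfrak T$ and the global surjectivity statement, whereas the paper's speed-bound route needs the harder pointwise derivative estimates imported from \cite{HHSZ}, which your backup sketch leaves unproved; either way the argument closes, but you should commit to one and supply the corresponding missing ingredient.
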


\begin{theorem}\label{Calabi flow}
	For any initial geodesic curvature $k(0)\in\mathbb{R}_{>0}^{|V^{\circ}|}$, the solution to the combinatorial Calabi flow (\ref{f1}) converges to the generalized circle packing on $S_{g,n}$ that realizes $(\hat{k},\hat{T})$.
\end{theorem}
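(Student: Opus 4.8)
\section*{Proof proposal}

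The plan is to realize the combinatorial Calabi flow as a negative gradient flow and then combine a Lyapunov/compactness argument with the rigidity already supplied by Theorem \ref{thm}. First I would pass to the logarithmic coordinates $u = (u_1, \ldots, u_{|V^{\circ}|})$ with $u_i = \log k_i$, under which the feasible region $\mathbb{R}_{>0}^{|V^{\circ}|}$ becomes all of $\mathbb{R}^{|V^{\circ}|}$ and the flow (\ref{f1}) takes the autonomous form $\dot u = -M'(T - \hat T)$, the factor $K = \mathrm{diag}\{k_1, \ldots, k_{|V^{\circ}|}\}$ being precisely what converts $\dot u$ into $\dot k$ (since $\dot k = K\dot u$). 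Writing $\mathcal{C}(u) = \tfrac12\|T(u) - \hat T\|^2$ for the Calabi energy, one checks $\nabla_u \mathcal{C} = M'(T - \hat T)$, so the flow is exactly $\dot u = -\nabla_u \mathcal{C}$. Because the Jacobi matrix $M$ is the Hessian of the convex functional underlying the variational principle that proves Theorem \ref{thm}, it is symmetric and positive definite, so $M' = M$ and, in particular, $M$ is nonsingular. By Theorem \ref{t1} the solution $u(t)$ is defined for all $t \in [0,\infty)$, and by Theorem \ref{thm} together with the hypothesis $\hat T \in \mathfrak{T}$ there is a unique $u^*$ with $T(u^*) = \hat T$, corresponding to the generalized circle packing realizing $(\hat k, \hat T)$.

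Next I would establish precompactness of the trajectory. Let $\mathcal{E}$ denote the convex functional with $\nabla_u \mathcal{E} = T - \hat T$ and $\mathrm{Hess}_u \mathcal{E} = M \succ 0$; its strict convexity and properness (the latter being exactly where the admissibility condition $\hat T \in \mathfrak{T}$ enters) make the sublevel sets $\{\mathcal{E} \le c\}$ compact, with unique minimizer $u^*$. Along the flow one computes $\frac{d}{dt}\mathcal{E}(u(t)) = \nabla_u \mathcal{E} \cdot \dot u = -(T-\hat T)' M (T - \hat T) \le 0$, so $u(t)$ stays in the compact set $\{\mathcal{E} \le \mathcal{E}(u(0))\}$ for all time, and the trajectory is precompact.

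Finally I would extract convergence by a Lyapunov argument. Along the flow $\frac{d}{dt}\mathcal{C}(u(t)) = -\|\nabla_u \mathcal{C}\|^2 = -\|M(T - \hat T)\|^2 \le 0$, with equality if and only if $T = \hat T$, since $M$ is nonsingular. Thus $\mathcal{C}$ is a Lyapunov function, strictly decreasing away from $u^*$, and by the LaSalle invariance principle the nonempty $\omega$-limit set of the precompact trajectory lies in the equilibrium set $\{u : M(T(u) - \hat T) = 0\} = \{u : T(u) = \hat T\} = \{u^*\}$, the last equality being the rigidity of Theorem \ref{thm}. An $\omega$-limit set contained in a single point forces $u(t) \to u^*$, i.e. $k(t) \to k^*$, so the flow converges to the generalized circle packing realizing $(\hat k, \hat T)$.

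The main obstacle I expect is the precompactness step. All-time existence from Theorem \ref{t1} guarantees $k_i(t) > 0$ for every finite $t$, but by itself does not prevent the trajectory from drifting to the boundary of the feasible region as $t \to \infty$ (some $k_i \to 0$ or $k_i \to \infty$, i.e. $\|u\| \to \infty$). Ruling this out is controlled precisely by the coercivity of the convex functional $\mathcal{E}$, which rests on $\hat T \in \mathfrak{T}$; verifying this coercivity carefully, and confirming that $M$ is genuinely positive definite on the relevant domain so that the equilibrium set reduces to the single point $u^*$, is the crux of the argument. The remaining steps are the standard gradient-flow bookkeeping.
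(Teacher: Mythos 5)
Your proposal is correct and follows essentially the same route as the paper: precompactness of the trajectory from the properness of the strictly convex potential $\Theta$ (your $\mathcal{E}$, whose critical point exists precisely because $\hat T\in\mathfrak{T}$), monotone decay of a Lyapunov quantity, and identification of the limit via the uniqueness of the critical point (Propositions \ref{p5} and \ref{p6}). The only cosmetic difference is that the paper uses $\Theta$ itself as the Lyapunov function and carries out the LaSalle-type argument by hand (mean value theorem on $[n,n+1]$ plus a subsequence extraction), whereas you recast the flow as the gradient flow of the Calabi energy $\tfrac12\|T-\hat T\|^{2}$ and invoke LaSalle directly.
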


In 1985,  at the International Symposium  in Celebration of the
Proof of the Bieberbach Conjecture, Thurston conjectured that the sequence of functions converges to the Riemann mapping from some bounded simply connected domain to the unit disc, provided the packings are taken as sub-packings of scaled copies of the infinite hexagonal circle packing.  Rodin and Sullivan subsequently proved this conjecture in \cite{ro2}. Later, Rodin \cite{ro} obtained the Schwarz Lemma for the circle packings based on the hexagonal combinatorics,  which is a natural analogue for the classical Schwarz Lemma. Huang, Liu and Shen \cite{huang} generalized the Schwarz’s Lemma of Rodin for the circle packings with the general combinatorics.

Beardon and Stephenson \cite{be} discovered the discrete Schwarz-Pick Lemma for Andreev circle packings, which %is a natural analogue for the classical  Schwarz Lemma. In the Schwarz's Lemma introduced by Rodin \cite{ro}, he compares packings with $N$-generational regular hexagonal packings. The discrete Schwarz-Pick Lemma 
is not directly related to Rodin's work. In two-dimensional hyperbolic geometry, the Schwartz-Pick theorem provides a method for comparing the distances between different points. Beardon and Stephenson  compared hyperbolic lengths and areas between two hyperbolic surfaces induced by circle packings with different boundary values. Z. He \cite{he}  proposed a maximum principle in the hyperbolic plane to prove the rigidity property for locally finite disk patterns in the hyperbolic plane. Moreover, the second part of the maximum principle is an analogue of the discrete Schwarz-Pick Lemma. 

Building on the works of Beardon, Stephenson and He, we naturally ask the following problem:\\

\noindent{\bf Problem: }\; {\em Can we generalized the maximum principle and the discrete Schwarz-Pick lemma for generalized circle packings in hyperbolic geometry?}

We solve this problem as the following Theorems \ref{thm5}
and \ref{thm6}.

Given two sets of prescribed boundary geodesic curvatures $\hat{k}, \hat{k}^* \in \mathbb{R}_{>0}^{|V^{\partial}|}$ and interior total geodesic curvatures $\hat{T} \in \mathfrak{T}$ for the vertex sets $V^{\partial}$ and $V^{\circ}$ respectively. Theorem \ref{thm} guarantees the existence of generalized circle packings $P$ and $P^*$ on $S_{g,n}$ that realize the curvature pairs $(\hat{k}, \hat{T})$ and $(\hat{k}^*, \hat{T})$ respectively. 

Let $k(P(v))$ and $k(P^*(v))$ denote the geodesic curvatures at the vertex $v$ in the generalized circle packings $P$ and $P^*$, respectively. Then we  establish the following maximum principle for generalized hyperbolic circle packings:

\begin{theorem}\label{thm5}(Maximum Principle in Hyperbolic Background Geometry)
		Let $P$ and $P^*$ be as above. Then:\\
		(a) The maximum of $k(P^*(v))/k(P(v))$, if $>1$, is never attained at an interior vertex; and\\
		(b) In particular, if the inequality $k(P^*(v))\le k(P(v))$ is true for each boundary vertex, then it holds for all vertices.
	\end{theorem}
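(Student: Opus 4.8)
The plan is to prove (a) by contradiction and then deduce (b) as an immediate consequence. Set $s(v) = k(P^*(v))/k(P(v))$ and suppose that $\lambda := \max_{v \in V} s(v) > 1$ is attained at some \emph{interior} vertex $v_0 \in V^{\circ}$. Since $V$ is finite the maximum is attained, and the key structural input is that both packings share the same prescribed interior data: because $v_0$ is interior, $T_{v_0}(P) = T_{v_0}(P^*) = \hat{T}_{v_0}$. The entire argument is local to the flower of $v_0$, that is, to $v_0$ together with its neighbours $u \sim v_0$; the curvatures at these vertices are the only quantities on which $T_{v_0}$ depends, through the face contributions recorded in the Jacobian $M$ of \eqref{ja}.

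First I would record the two monotonicity properties of the map $(k_{v_0}, \{k_u\}_{u\sim v_0}) \mapsto T_{v_0}$ that are implicit in the sign structure of $M$: (i) $T_{v_0}$ is strictly decreasing in each neighbouring curvature $k_u$; and (ii) a hyperbolic \emph{scaling} comparison, namely that multiplying every curvature in the flower by a common factor $\lambda > 1$ strictly increases $T_{v_0}$. Granting these, introduce the auxiliary flower $\hat{P}$ with curvatures $\lambda k_{v_0}$ at the centre and $\lambda k_u$ at each petal. By (ii), $T_{v_0}(\hat{P}) > T_{v_0}(P) = \hat{T}_{v_0}$. Now pass from $\hat{P}$ to $P^*$: at the centre the curvature is unchanged since $\lambda k_{v_0} = k(P^*(v_0))$ by the choice of $\lambda$, while at each petal $k(P^*(u)) \le \lambda k_u$ because $s(u) \le \lambda$, so no petal curvature increases. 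By (i) this can only increase $T_{v_0}$, whence $T_{v_0}(P^*) \ge T_{v_0}(\hat{P}) > \hat{T}_{v_0}$. This contradicts $T_{v_0}(P^*) = \hat{T}_{v_0}$, proving (a).

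Part (b) then follows formally. Assume $k(P^*(v)) \le k(P(v))$, i.e. $s(v) \le 1$, at every boundary vertex. If $\max_{V} s \le 1$ we are done. Otherwise $\max_{V} s > 1$; by (a) this maximum is not attained at an interior vertex, so it is attained at a boundary vertex, where $s \le 1$ — a contradiction. Hence $\max_{V} s \le 1$, i.e. $k(P^*(v)) \le k(P(v))$ for all $v$.

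The main obstacle is establishing the two monotonicity statements, especially the scaling comparison (ii), in the \emph{generalized} setting that treats circles, horocycles and hypercycles uniformly. In He's disk-pattern argument these facts are read off from the radius, using that shrinking all disks pushes the flower toward the Euclidean (scale-invariant) model from one definite side; but for horocycles ($k=1$) and hypercycles ($k<1$) the radius is infinite or undefined, so the comparison must be carried out directly in the curvature variable $k_v \in \mathbb{R}_{>0}$ and verified from the explicit face formulas for $T_v$ across all three regimes. Checking that the sign of each entry of $M$ and of the scaling derivative is uniform over circle/horocycle/hypercycle faces is the technical heart; once this is in place, the comparison chain above closes the proof.
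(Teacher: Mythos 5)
Your proposal is correct and takes essentially the same route as the paper: the interior maximum $\lambda>1$ is ruled out using exactly the two sign facts you isolate --- strict negativity of the off-diagonal derivatives $\partial T_{v_0}/\partial s_u$ for petals $u\sim v_0$ and strict positivity of the row sum $\sum_i \partial T_{v_0}/\partial s_i$ (both drawn from Lemma 2.11 of \cite{BHS} together with the Gauss--Bonnet area identity) --- and part (b) is deduced by the identical formal argument. The only difference is presentational: you package the comparison as a two-step deformation (scale the whole flower by $\lambda$, then lower the petal curvatures), whereas the paper interpolates linearly in $s=\ln k$ and applies the mean value theorem; the paper's key inequality (\ref{4.10}) is precisely your decomposition written infinitesimally, so the ``technical heart'' you flag is exactly the content the paper imports from \cite{BHS}.
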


Using the maximum principle in hyperbolic background geometry, we establish and prove the following discrete Schwartz-Pick lemma for generalized hyperbolic circle packings:

\begin{theorem}\label{thm6}(Discrete Schwartz-Pick Lemma)
		Let $\operatorname{Area}(\Omega_f),\operatorname{Area}(\Omega^*_f),l_v^f, l_v^{f*}, d_{ij}$ and $d_{ij}^*$ be as in section \ref{sd}. If the inequality $k(P^*(v))\le k(P(v))$ is true for each boundary vertex, then:\\
		(a) $\operatorname{Area}(\Omega^*_f)\ge\operatorname{Area}(\Omega_f)$ for every face $f$; \\
		(b) $l_v^{f*}\ge l_v^f$ for every face $f$ and every vertex $v$ of $f$;\\
		(c) $d_{ij}^*\ge d_{ij}$ for any two tangent points $i,j\in\hat{V}^*$ and corresponding tangent points $i^*,j^*\in\hat{V}^*$.
	\end{theorem}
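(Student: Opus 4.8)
The plan is to leverage the maximum principle (Theorem \ref{thm5}) to turn the boundary hypothesis into a global pointwise comparison of geodesic curvatures, and then to derive each of the three inequalities (a)--(c) from a purely local monotonicity statement attached to a single face. First I would invoke Theorem \ref{thm5}(b): since $k(P^*(v))\le k(P(v))$ holds on every boundary vertex, the maximum principle propagates it to every interior vertex, so that
\begin{equation}
k(P^*(v)) \le k(P(v)) \qquad \text{for all } v \in V.
\end{equation}
Geometrically this says that at every vertex the generalized circle of $P^*$ is at least as large as that of $P$, since in hyperbolic background geometry the geodesic curvature of a generalized circle is a strictly decreasing function of its size (for an honest circle $k=\coth r$ decreases in the radius $r$, with the horocyclic value $k=1$ and the hypercyclic values $k<1$ as the limiting and exterior regimes).

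Second, I would reduce the three global comparisons to a single face. Each face $f$ of $\mathcal{T}$ carries a configuration of three pairwise tangent generalized circles whose geometry --- and hence the quantities $\operatorname{Area}(\Omega_f)$, the leg-lengths $l_v^f$, and the tangent-point distances $d_{ij}$ introduced in Section \ref{sd} --- is determined, up to isometry, by the triple of geodesic curvatures at the vertices of $f$. Thus it suffices to prove that each of these quantities, viewed as a function of $(k_i,k_j,k_l)$, is nonincreasing in each argument. Granting this, the global inequality $k(P^*(v))\le k(P(v))$ immediately yields $\operatorname{Area}(\Omega^*_f)\ge\operatorname{Area}(\Omega_f)$, $l_v^{f*}\ge l_v^f$, and $d_{ij}^*\ge d_{ij}$, which are precisely (a), (b), and (c).

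Third, the analytic core is the monotonicity itself. Using the explicit hyperbolic trigonometric expressions from Section \ref{sd}, I would compute the partial derivative of each quantity with respect to each curvature and verify that the sign is $\le 0$. For the tangent-point distances $d_{ij}$ and the leg-lengths $l_v^f$ this is a direct differentiation of the tangency and law-of-cosines relations: decreasing a curvature enlarges the corresponding circle, lengthening the sides and the relevant sub-arcs. For the area I would instead use the Gauss--Bonnet relation, writing $\operatorname{Area}(\Omega_f)$ as $\pi$ minus the sum of the inner angles of the underlying geodesic triangle; since the side-lengths grow as the curvatures decrease, the angles shrink, and the area inequality follows.

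The main obstacle I anticipate is establishing these derivative signs \emph{uniformly} across all three regimes --- circles ($k>1$), horocycles ($k=1$), and hypercycles ($k<1$) --- because the closed-form expressions and even the notions of ``center'' and ``radius'' degenerate at the transition $k=1$. I would handle this by working throughout with the curvature variables, which vary smoothly and monotonically over the entire range $(0,+\infty)$, rather than with radii, so that a single analytic formula and a single sign computation cover all cases, with the horocyclic case recovered as a removable limit. A secondary subtlety is confirming that the coupling among the three quantities within one face does not spoil monotonicity; this is resolved by checking that each partial derivative retains a definite sign independently of the values of the other two curvature arguments.
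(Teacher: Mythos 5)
Your overall strategy --- use Theorem \ref{thm5}(b) to upgrade the boundary hypothesis to $k(P^*(v))\le k(P(v))$ for all $v\in V$, then reduce (a)--(c) to monotonicity of per-face quantities in the vertex curvatures --- is exactly the paper's. Part (a) then follows as in the paper from the known sign $\partial\operatorname{Area}(\Omega_f)/\partial k_v<0$ (Lemma 2.11 of \cite{BHS}); note, though, that $\Omega_f$ is the curvilinear region bounded by the sub-arcs inside the dual circle, not the geodesic triangle $\hat f$, so your proposed derivation via ``$\pi$ minus the inner angles of the underlying geodesic triangle'' is aimed at the wrong region (and the claim that growing all three sides of a hyperbolic triangle shrinks every angle is itself not immediate).

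The genuine gap is in (b). The sub-arc $C_v^f$ is the portion of $C_v$ cut off by the dual circle $C_f$, so $l_v^f$ is a function of \emph{two} curvatures, $k_v$ and $k_f$, and the dual curvature $k_f$ changes when any of the three vertex curvatures changes. Your heuristic ``decreasing a curvature enlarges the circle, hence lengthens the sub-arc'' does not address this: enlarging $C_v$ while simultaneously moving $C_f$ could a priori shorten the cut-off arc. The paper closes this by proving two separate sign statements, $\partial l_v^f/\partial k_f=\frac{2}{1-k_v^2-k_f^2}<0$ and $\partial l_v^f/\partial k_v<0$ (the latter requiring a case analysis over the hypercycle/horocycle/circle regimes via the auxiliary functions $x/(1-x^2)-\operatorname{arctanh}x$ and $x/(1+x^2)-\arctan x$), and then invoking the identity $k_f^2=k_{v_i}k_{v_j}+k_{v_j}k_{v_l}+k_{v_i}k_{v_l}+1$ to conclude that $k_f^*\le k_f$ whenever all vertex curvatures decrease. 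Without identifying $k_f$ as the intermediary and controlling its dependence on the vertex data, your ``direct differentiation'' plan for $l_v^f$ is not a proof. Finally, for (c) the distances $d_{ij}$ are not per-face quantities: they are boundary distances on the glued surface $\hat S$, and the paper obtains (c) from (b) by decomposing a minimizing boundary path into a concatenation of sub-arcs across several faces and comparing term by term; your single-face reduction does not cover this.
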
	
	
\noindent\textbf{Organization} This paper is organized as follows. In section \ref{22}, we introduce some preliminaries of generalized circle packings. In section \ref{s1}, we construct some potential functions for local generalized circle packings and prove the strict convexity of these functions. In section \ref{s2}, we construct the potential function for generalized circle packings and prove the strict convexity of the function. In section \ref{s3}, we prove the existence and rigidity of generalized circle packings on $S_{g,n}$ (cf. Theorem \ref{thm}). In section \ref{s4}, we define the combinatorial Calabi flows for prescribed total geodesic curvatures and study the convergence of solutions to the flows (cf. Theorems \ref{t1} and \ref{Calabi flow}). In section \ref{sd}, we prove the maximum principle in hyperbolic background geometry and the discrete Schwartz-Pick lemma for generalized circle packings in hyperbolic background geometry(cf. Theorems \ref{thm5} and \ref{thm6}).

	\section{Preliminary}\label{22}	
	\subsection{Generalized circle packings on triangles}\label{GCPT}
	In Poincar\'{e} disk model $\mathbb{H}^2$, a \textbf{generalized circle} can be a circle, a horocycle, or a hypercycle. It is worthy to mention that, the center of a horocycle is its unique limiting ideal point, and the center of a hypercycle is the geodesic that connects its two limiting ideal points, as shown in Figure \ref{6}.
	
	\begin{figure}[ht] 
		\centering 
		\includegraphics[width=0.3\textwidth]{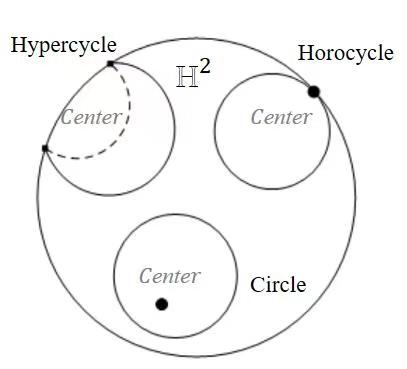} 
		\caption{\small The generalized circles on $\mathbb{H}^2$.}
		\label{6} 
	\end{figure}
	
	For a generalized circle, the radius, which is the distance between a point on it with the center, is constant. It is known that the shape of a generalized circle can be determined by its constant geodesic curvature \( k \). The relation between the radius $r$ and the constant geodesic curvature \( k \) is 
	\begin{equation}
		r(k)= \begin{cases}\operatorname{arctanh} k & \text { if } k<1 \\ +\infty & \text { if } k=1 \\ \operatorname{arccoth} k & \text { if } k>1,\end{cases}
	\end{equation}
	where the generalized circle is a hypercycle if $k<1$, a horocycle if $k=1$ and a circle if $k>1$.
	
	The inner angle $\theta$ of the arc $C$ for circle and horocycle is defined as the interior angle of two shortest geodesics, which connects the end points of arc $C$ and center. For hypercycle, the inner angle $\theta$ is defined as the hyperbolic length of shortest distance projection of $C$ onto the geodesic axis $\gamma$, as shown in Figure \ref{7}.
	
	\begin{figure}[ht]
		\centering
		\includegraphics[scale=0.3]{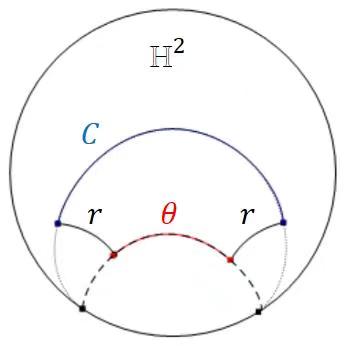}
		%\caption{B}
		\captionof{figure}{\small The inner angle $\theta$ of the arc $C$ for hypercycle.}
		\label{7}
	\end{figure}
	The Table \ref{relation} shows the relations among the radius $r$, geodesic curvature $k$, inner angle $\theta$ and the corresponding arc length $l$. For the case of horocycle, the arc length is shown in \cite[Lemma 2.5]{BHS}. 
	\begin{table}[ht]
		\centering
		\caption{\small The relation of radii, absolute values of geodesic curvatures and arc lengths}
		\begin{tabular}{|l|l|l|l|}
			\hline & Radius & Absolute value of geodesic curvature & Arc length \\
			\hline Circle & $0<r<\infty$ & $k=\operatorname{coth} r$ & $l=\theta \sinh r$ \\
			\hline Horocycle & $r=\infty$ & $k=1$ &- \\
			\hline Hypercycle & $0<r<\infty$ & $k=\tanh r$ & $l=\theta \cosh r$ \\
			\hline
		\end{tabular}
		\label{relation}
	\end{table}
	
	\begin{figure}%[htbp]
		\centering
		\includegraphics[scale=0.25]{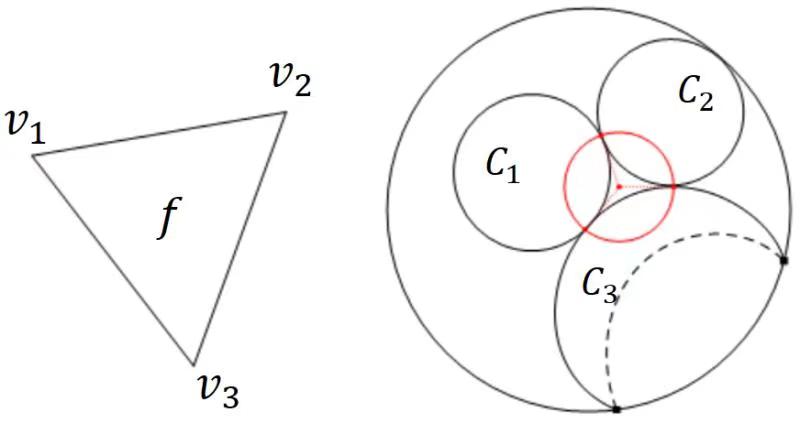}
		%\caption{B}
		\captionof{figure}{\small The generalized circle packing of the triangle $f$.}
		\label{5}
	\end{figure}
 
	We need the following result for generalized circle packing in hyperbolic background geometry.
	
	\begin{lemma}(\cite[Lemma 2.4]{BHS})\label{l1}
		Given a triangle $f$ with vertices $v_1,v_2,v_3$. For any $k=(k_1,k_2,k_3)\in \mathbb{R}_{>0}^{3}$, then there exists a geometric pattern $P(f)$ formed by three generalized circles $C_1,C_2,C_3$ on $\mathbb{H}^2$ which satisfies the following conditions:
		\begin{enumerate}
			\item The generalized circles $C_1$, $C_2$ and $C_3$ are mutually externally tangent.
			\item There exists a hyperbolic circle $C_f$ which perpendiculars to each $C_i$ and contains all tangent points of  $C_1,C_2,C_3.$ We call $C_f$ the dual circle of $f$.
			\item The geodesic curvature of generalized circle $C_i$ is $k_i, i=1,2,3$. 
		\end{enumerate} 
		The geometric pattern $P(f)$ is also called a \textbf{generalized circle packing} with geodesic curvatures $k=(k_1,k_2,k_3)$ of the triangle $f$ as shown in Figure \ref{5}. 
	\end{lemma}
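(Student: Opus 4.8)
The plan is to collapse this two–dimensional construction to a single transcendental equation in the radius $\rho$ of the dual circle $C_f$, which I would solve by the intermediate value theorem. Fix a point $O\in\mathbb{H}^2$ to serve as the center of $C_f$, and note the elementary fact that a generalized circle $C$ of geodesic curvature $k$ is orthogonal to the hyperbolic circle of radius $\rho$ about $O$ exactly when the tangent length from $O$ to $C$ equals $\rho$ (at an orthogonal crossing the radius of $C_f$ meets $C$ tangentially). To each generalized circle I therefore attach its \emph{angular radius} $\psi(k,\rho)$ as seen from $O$, namely half the angle between the two geodesics issuing from $O$ that are tangent to $C$ when $C$ is placed at tangent-distance $\rho$. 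For an honest circle ($k>1$, radius $r=\arccoth k$) a hyperbolic right-triangle computation gives the explicit law $\tan\psi(k,\rho)=\tanh r/\sinh\rho=1/(k\sinh\rho)$, and one checks that the two tangent points from $O$ are precisely the two points of $C\cap C_f$.

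The heart of the matter is the claim that, for each of the three types in Table \ref{relation}, the function $\psi(k,\cdot)$ is continuous and strictly decreasing on $(0,\infty)$ with $\psi(k,0^+)=\pi/2$ and $\psi(k,+\infty)=0$. The limiting values are geometrically transparent: as $\rho\to 0$ the dual circle collapses to $O$, so any orthogonal generalized circle passes through $O$ and crosses $C_f$ almost diametrically, forcing $\psi\to\pi/2$; as $\rho\to\infty$ a fixed generalized circle subtends a vanishing angle, so $\psi\to 0$. For circles everything is immediate from the explicit formula. The one genuinely technical point, which I expect to be the main obstacle, is to make the notion of angular radius precise and to verify the same monotonicity and limits for horocycles ($k=1$, center an ideal point) and hypercycles ($k<1$, center a geodesic axis), where the right-triangle derivation no longer applies and one must argue by a direct computation in the disk model; this is exactly where the case distinction of Table \ref{relation} enters.

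Granting this monotonicity lemma, I would set $\Psi(\rho)=\psi(k_1,\rho)+\psi(k_2,\rho)+\psi(k_3,\rho)$, which is continuous and strictly decreasing with $\Psi(0^+)=3\pi/2>\pi$ and $\Psi(+\infty)=0<\pi$; hence there is a unique $\rho^{\ast}$ with $\Psi(\rho^{\ast})=\pi$. Take $C_f$ to be the circle of radius $\rho^{\ast}$ about $O$ and partition the total angle $2\pi$ at $O$ into three consecutive sectors of widths $2\psi(k_1,\rho^{\ast}),2\psi(k_2,\rho^{\ast}),2\psi(k_3,\rho^{\ast})$. In the $i$-th sector inscribe the generalized circle $C_i$ of curvature $k_i$ tangent to both bounding geodesics at distance $\rho^{\ast}$ from $O$; by the tangent-length characterization each $C_i$ is orthogonal to $C_f$, giving conditions (2) and (3). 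Two adjacent circles $C_i,C_j$ share a bounding geodesic and touch it at the same point $P_{ij}$ at distance $\rho^{\ast}$ from $O$, so $P_{ij}$ lies on $C_f$; since both circles are orthogonal to $C_f$ there, they share a tangent line at $P_{ij}$ and meet only at that point, which yields external tangency and places every tangent point on $C_f$, establishing (1). This completes the existence construction, and strict monotonicity of $\Psi$ shows that $\rho^{\ast}$, and hence the pattern $P(f)$, is determined up to isometry.
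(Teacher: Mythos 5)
The paper does not prove this lemma at all: it is imported verbatim as \cite[Lemma 2.4]{BHS}, so there is no in-paper argument to compare against. Your proposal is therefore a genuinely independent route, and its skeleton is sound. The reduction to the scalar equation $\Psi(\rho)=\pi$ via the tangent-length characterization of orthogonality is the right mechanism: the tangent points from the center $O$ are exactly the intersections $C_i\cap C_f$, adjacent circles inscribed in consecutive sectors touch the shared bounding geodesic at the same point at distance $\rho^{\ast}$ from $O$, and convexity of each generalized disk (which does hold for the hypercycle-bounded region containing its axis, being a sublevel set of the distance to a half-plane) gives $D_i\cap D_j=\{P_{ij}\}$ and hence external tangency. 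As a consistency check, for three circles your equation $\sum_i\arctan\bigl(1/(k_i\sinh\rho)\bigr)=\pi$ with $k_f=\coth\rho$ reduces, via the identity $\arctan x_1+\arctan x_2+\arctan x_3=\pi\iff x_1x_2x_3=x_1+x_2+x_3$, to $k_f^2=k_1k_2+k_2k_3+k_1k_3+1$, which is precisely the formula the paper later quotes from \cite[Corollary 4.2]{cr} in the proof of Theorem \ref{thm6}. So your construction is not only correct in outline but reproduces the known closed form for the dual circle.

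The gap you flag is the real one, and it is not cosmetic: for horocycles and hypercycles you must (i) verify that the configuration (generalized circle of curvature $k$, external point at tangent-length $\rho$) is unique up to isometry so that $\psi(k,\rho)$ is well defined, and (ii) establish continuity, strict monotonicity in $\rho$, and the limits $\pi/2$ and $0$. For horocycles this is a short half-plane computation giving $\sin\psi=1/\cosh\rho$, i.e.\ $\tan\psi=1/\sinh\rho$, matching the $k\to1$ limit of your circle formula. For hypercycles, point (i) follows because the stabilizer of a hypercycle acts transitively on each equidistant locus of its axis, and a computation with the model $\{\arg z=\beta\}$ yields $\cos\psi=\sqrt{\cos^2\theta_0-\cos^2\beta}/\sin\beta$ for $O=e^{i\theta_0}$, from which monotonicity and the limits can be read off once $\theta_0$ is converted to $\rho$. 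None of this is conceptually hard, but until it is written down the proof of the key monotonicity lemma, and hence of existence and of the uniqueness of $\rho^{\ast}$, is incomplete. I would also note that the lemma as stated only asserts existence; your closing claim that the pattern is determined up to isometry additionally requires showing that every packing satisfying (1)--(3) arises from your construction (i.e.\ that the three sectors at the dual center necessarily tile the full angle $2\pi$), which you have not argued.
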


	For convenience, we also denote the center of $C_i$ by $v_i$ in this paper.
	
	By Lemma \ref{l1}, the generalized circle packing $P(f)$ is determined by the geodesic curvatures $k_1,k_2,k_3$ of respective vertices $v_1,v_2,v_3$. 
	
	As shown in in Figure \ref{4}, if we add the shortest geodesics segment $v_{i}v_{j}$ that connect $v_i$ and $v_j$, we can obtain a hyperbolic polygon $\hat{f}$. Let $l_{ij}$ be the length of $v_{i}v_{j}$. It is obvious that $l_{ij}= r(k_i)+r(k_j)$.

	\begin{figure}[htbp]
		\centering
		\includegraphics[scale=0.3]{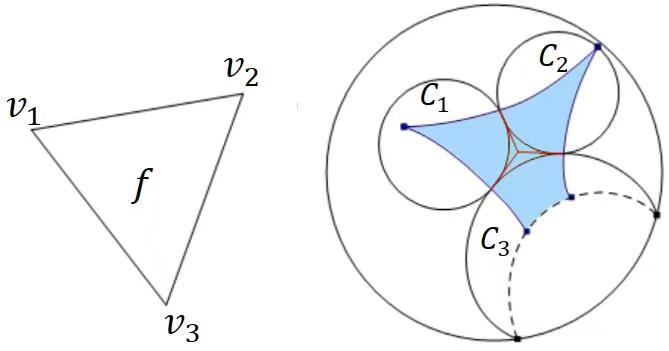}
		%\caption{B}
		\captionof{figure}{\small The triangle $f$ and hyperbolic triangle $\hat{f}$.}
		\label{4}
	\end{figure}
	
	\begin{remark}
		Since there are three types of generalized circle, so there are ten types of hyperbolic polygon that $\hat{f}$ could be, as shown in Figure \ref{3}. Since we regard the center of a hypercycle as a point, we would also say $\hat{f}$ is a \textbf{hyperbolic triangle}.
		
	\end{remark}

	In this paper, the hyperbolic triangle $\hat{f}$ is called  hyperbolic triangle of generalized circle packing $P(f)$ with geodesic curvatures $(k_1,k_2,k_3)$ on the triangle $f$.

	\subsection{Generalized circle packing on surfaces}\label{GCPS}
	Let $S_{g,n}$ be a surface with boundary, which is obtained by removing $n\ge 1$ topological open disks from an oriented compact surface with genus $g\geq0$. Given a triangulation $\mathcal{T}=(V,E,F)$ of $S_{g,n}$, where $V$, $E$ and $F$ denote the set of vertices, edges and faces, respectively. In this paper, there exists at least two boundary vertices in $V$ and for each $f\in F$, at most two vertices of $f$ are boundary vertices, as shown in Figure \ref{1}.
	
		\begin{figure}[htbp]
		\centering
		\includegraphics[scale=0.25]{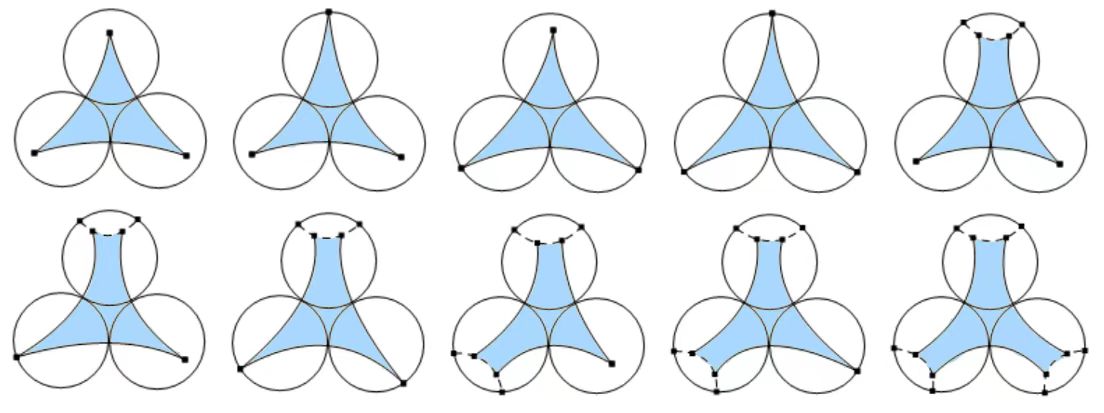}
		%\caption{B}
		\captionof{figure}{\small The hyperbolic triangles of $f$.}
		\label{3}
	\end{figure}
	Given a topological surface $S_{g,n}$ with a triangulation $\mathcal{T}=(V,E,F)$, for any geodesic curvature $k\in \mathbb{R}^{|V|}_{>0}$ on $V$, we can construct a new surface $\hat{S}_{g,n}$ as follows.
	\begin{enumerate}
		\item For each face $f\in F$, by section \ref{s1}, we can construct a 
		hyperbolic triangle $\hat{f}$ of generalized circle packing $P(f)$ with geodesic curvatures $(k_{v_1},k_{v_2},k_{v_3})$ on the face $f$.
		\item Gluing all the hyperbolic triangles together along their edges, then we obtain a surface $\hat{S}_{g,n}$ with a hyperbolic conical metric, the metric is called a \textbf{generalized circle packing metric}.
	\end{enumerate}

	All the generalized circle packings form a new geometric pattern $P$, the geometric pattern $P$ is also called a \textbf{generalized circle packing} on topological surface $S_{g,n}$. 
	
	\begin{figure}[htbp]
		\centering
		\includegraphics[scale=0.8]{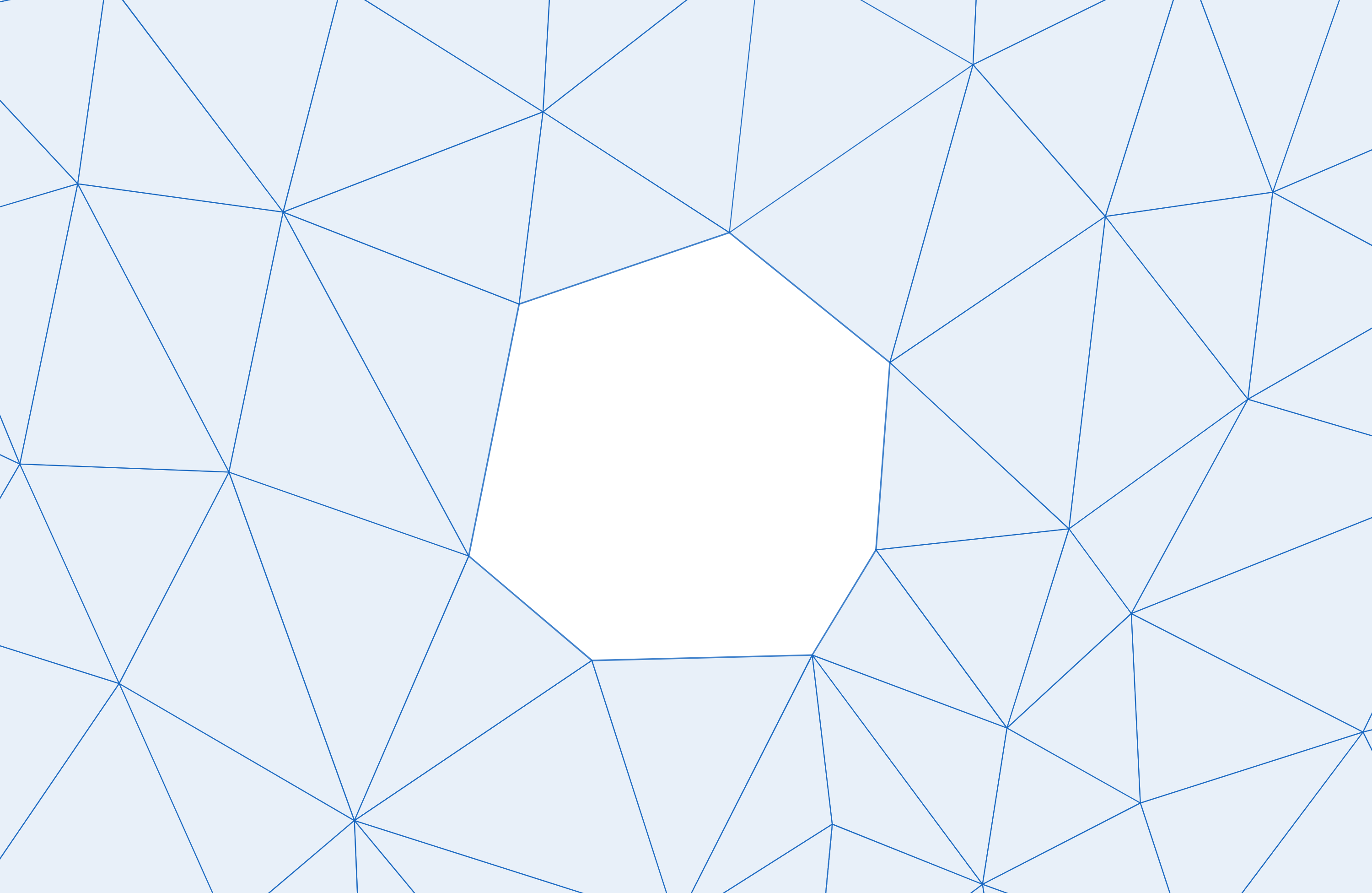}
		%\caption{B}
		\captionof{figure}{\small The triangulation of $S_{g,n}$.}
		\label{1}
	\end{figure} 
	
	Let $V_1,V_2$ and $V_3$ be the sets of vertices defined as follows.
	\begin{enumerate}
		\item $v\in V_1$ iff $0<k_{v}<1$.
		\item $v\in V_2$ iff $k_{v}=1$.
		\item $v\in V_3$ iff $k_{v}>1$.
	\end{enumerate}
	Topologically, $\hat{S}_{g,n}$ can be obtained by removing a disk or a half disk (a point) for each $v\in V_1$($v\in V_2$) from $S_{g,n}$ respectively.

	\section{Potential functions for local generalized circle packing}\label{s1}

	By $l_{i}^f$ we denote the length of the sub-arc $C_{i}^f$ of $C_i$ contained in dual circle $C_f$, then we have the following lemma.
	
	\begin{lemma}\label{l2}
		If $k_1,k_2$ and $k_3$ are not all constants, then the differential 1-form
		\[
		\eta=\sum_{i=1}^3l_{i}^f\mathrm{d}k_i
		\]
		is closed.
	\end{lemma}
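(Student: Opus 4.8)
The plan is to establish that $\eta$ is closed by verifying the symmetry of its mixed partial derivatives. Since
\[
d\eta=\sum_{i<j}\left(\frac{\partial l_j^f}{\partial k_i}-\frac{\partial l_i^f}{\partial k_j}\right)dk_i\wedge dk_j ,
\]
closedness is equivalent to the three identities $\partial l_i^f/\partial k_j=\partial l_j^f/\partial k_i$ for $i\neq j$, so the entire problem reduces to these symmetry relations with $(k_1,k_2,k_3)$ regarded as coordinates on the parameter domain $\mathbb{R}_{>0}^3$.

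First I would record the explicit shape of each arc length. By the definition of the inner angle $\theta_i$ and by Table \ref{relation}, the sub-arc $C_i^f$ cut off by the dual circle $C_f$ is exactly the arc of $C_i$ between its two tangent points, which is measured by the inner angle $\theta_i$ at the center $v_i$. Hence in every case $l_i^f=\theta_i\,g(k_i)$, where $g(k_i)=\sinh r(k_i)$ in the circle case, $g(k_i)=\cosh r(k_i)$ in the hypercycle case, and the horocycle case arises as the limit. The decisive feature is that the factor $g(k_i)$ is intrinsic to $C_i$, so $l_i^f$ depends on $k_j$ with $j\neq i$ \emph{only} through the angle $\theta_i$.

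Consequently, for $i\neq j$ one has $\partial l_i^f/\partial k_j=g(k_i)\,\partial\theta_i/\partial k_j$, and the required symmetry becomes
\[
g(k_i)\,\frac{\partial\theta_i}{\partial k_j}=g(k_j)\,\frac{\partial\theta_j}{\partial k_i}.
\]
To make this transparent I would introduce the change of variables $u_i=u_i(k_i)$ defined by $du_i=g(k_i)\,dk_i$; a direct integration yields $u_i=-\log\tanh\!\big(r(k_i)/2\big)$ in the circle case, with the analogous primitives in the remaining cases. In the variables $u_i$ the form reads $\eta=\sum_i\theta_i\,du_i$, so closedness is precisely the symmetry $\partial\theta_i/\partial u_j=\partial\theta_j/\partial u_i$ of the Jacobian of the angle map. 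This is the classical symmetry of angle derivatives for a hyperbolic triangle $\hat f$ with side lengths $l_{ij}=r(k_i)+r(k_j)$, which I would derive from the hyperbolic law of cosines: differentiating $\cos\theta_i$ in the side lengths and clearing the common area (sine) denominator makes the off-diagonal entries manifestly symmetric.

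The main obstacle I anticipate is the absence of a single honest hyperbolic triangle valid across all configurations: when $k_i=1$ the vertex $v_i$ is an ideal cusp, while when $k_i<1$ the ``center'' is a geodesic axis rather than a point, so $\theta_i$ is a projected length (Figure \ref{7}) and the usual cosine law must be replaced by its horocyclic or hypercyclic analogue. I would resolve this either by treating the ten configuration types noted above and repeating the trigonometric computation in each, or—more economically—by first proving the symmetry on the open stratum where $C_1,C_2,C_3$ are all genuine circles and then extending $d\eta=0$ to the degenerate strata by continuity of $l_i^f$ and its first derivatives in $(k_1,k_2,k_3)$. Checking the differentiability that legitimizes such a continuity argument, together with the explicit trigonometric identities in the degenerate cases, is where the real computational work will lie.
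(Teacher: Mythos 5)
Your reduction of closedness to the symmetry $\partial l_i^f/\partial k_j=\partial l_j^f/\partial k_i$, and then---via $l_i^f=\theta_i\,g(k_i)$ and the substitution $du_i=g(k_i)\,dk_i$---to the classical symmetry $\partial\theta_i/\partial u_j=\partial\theta_j/\partial u_i$ of Colin de Verdi\`ere and Chow--Luo, is a genuinely different route from the paper's. The paper never touches the triangle angles: it quotes Lemmas 2.7 and 2.8 of \cite{BHS}, which give closedness directly in terms of the dual-circle curvature $k_f$, and it handles the case of one constant $k_i$ through the identity $\eta+l_f\,\mathrm{d}k_f=\sum_i\bigl(l_i^f\,\mathrm{d}k_i+\tilde l_i^f\,\mathrm{d}k_f\bigr)$ together with the exactness of $l_f\,\mathrm{d}k_f$. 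Your route is more self-contained and more conceptual on the all-circle stratum $\{k_1,k_2,k_3>1\}$, where $u_i=\log\tanh(r_i/2)$ is the standard conformal coordinate and the symmetry is indeed classical; note also that proving closedness in all three variables automatically yields the restricted cases the paper treats separately, since the pullback of a closed form to a coordinate slice is closed.

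There is, however, a genuine gap in how you propose to cover the remaining configurations. The hypercycle regions, e.g.\ $\{k_1<1\}$, are open, full-dimensional subsets of the parameter space $\mathbb{R}_{>0}^3$ and are \emph{not} contained in the closure of the all-circle region $\{k_i>1\ \text{for all}\ i\}$, so no continuity or limiting argument from the circle case can reach them. Continuity can at best handle the walls $k_i=1$, and even there your decomposition $l_i^f=\theta_i\,g(k_i)$ degenerates to a $0\cdot\infty$ form (the angle at an ideal center vanishes while $\sinh r_i\to\infty$, which is exactly why Table \ref{relation} leaves the horocycle arc length blank), and $du_i=g(k_i)\,dk_i$ becomes a singular change of variables. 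Consequently, of your two fallbacks only the first---the explicit verification of $\partial\theta_i/\partial u_j=\partial\theta_j/\partial u_i$ for each generalized triangle type, using the pentagon/hexagon analogues of the law of cosines at ideal and hyperideal vertices---actually works, and that computation is precisely the content of the lemma outside the circle case; it is not carried out. As written, the proposal establishes the lemma only when all three generalized circles are genuine circles.
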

	
	\begin{proof}
		
		If two of $k_1, k_2$ and $k_3$ are constants, we can assume that $k_1$ and $k_2$ are constants, then $l_{3}^f$ is a smooth function with respect to $k_3$ and $\eta=l_{3}^f\mathrm{d}k_3$. Hence we obtain that $\mathrm{d}\eta=0$, the 1-form $\eta$ is closed. 
		
		If one of $k_1, k_2$ and $k_3$ is a constant, we can assume that $k_3$ is a constant, then $\eta=l_{1}^f\mathrm{d}k_1+l_{2}^f\mathrm{d}k_2$. By $l_f$ we denote the length of dual circle $C_f$ and by $\tilde{l}_{i}^{f}$ we denote the length of the sub-arc $\tilde{C}_{i}^{f}$ of $C_f$ contained in $C_i$ as shown in Figure \ref{l2}. Then we know that $l_f=\tilde{l}_{1}^{f}+\tilde{l}_{2}^{f}+\tilde{l}_{3}^{f}$ and $l_f$ is a smooth function with respect to $k_f$, where $k_f$ is the geodesic curvature of dual circle $C_f$.
		\begin{figure}[htbp]
			\centering
			\includegraphics[scale=0.80]{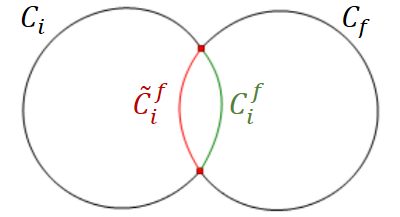}
			%\caption{B}
			\captionof{figure}{\small The sub-arc $C_{i}^{f}$ and $\tilde{C}_{i}^{f}$.}
			\label{2}
		\end{figure} 
		
		Then we have
		\begin{align*}
			\eta+l_f\mathrm{d}k_f&=l_{1}^{f}\mathrm{d}k_1+l_{2}^{f}\mathrm{d}k_2+l_f\mathrm{d}k_f\\
			&=(l_{1}^{f}\mathrm{d}k_1+\tilde{l}_{1}^{f}\mathrm{d}k_f)+(l_{2}^{f}\mathrm{d}k_2+\tilde{l}_{2}^{f}\mathrm{d}k_f)+\tilde{l}_{3}^{f}\mathrm{d}k_f.
		\end{align*}
		By Lemma 2.7 in \cite{BHS}, we know that the 1-form $l_{i}^f\mathrm{d}k_i+\tilde{l}_{i}^{f}\mathrm{d}k_f$ is closed, $i=1,2$. Since $k_3$ is a constant, then $\tilde{l}_{3}^{f}$ is a smooth function with respect to $k_f$. Hence we obtain that $\mathrm{d}\eta=0$, the 1-form $\eta$ is closed.  
		
		If $k_1,k_2$ and $k_3$ are not constants, by Lemma 2.8 in \cite{BHS}, the 1-form $\eta$ is closed.
	\end{proof}
	
	By $T_i^f=l_i^f\cdot k_i$ we denote the total geodesic curvature of sub-arc $C_i^f$ of $C_i$ contained in dual circle $C_f$, $i=1,2,3$. Set $s_i=\ln k_i$, $i=1,2,3$ and by Lemma \ref{l2}, we know that if $k_1,k_2$ and $k_3$ are not all constants, then the differential 1-form
	\[\omega_{f}=\sum_{i=1}^3 T_i^f \mathrm{d}s_{i},\]
	is closed. If $k_1,k_2$ and $k_3$ are not constants, then we can define a potential function on $\mathbb{R}^3$ as follow: 
	\[\mathcal{E}_f(s_1,s_2,s_3)=\int^{(s_1,s_2,s_3)}_{0}\omega_{f}.\]
	If only $k_1$ is a constant, then we can define a potential function on $\mathbb{R}^2$ as follow:
	\[\mathcal{E}_f^1(s_2,s_3)=\int^{(s_2,s_3)}_{0}\omega_{f}.\]
	If only $k_2$ is a constant or only $k_3$ is a constant, then we can similarly define the potential functions $\mathcal{E}_f^2(s_1,s_3)$ and $\mathcal{E}_f^3(s_1,s_2)$.
	
	If only $k_1$ is not a constant, then we can define a potential function on $\mathbb{R}$ as follow:
	\[\mathcal{E}_f^1(s_1)=\int^{s_1}_{0}\omega_{f}.\]
	If only $k_2$ is not a constant or only $k_3$ is not a constant, then we can similarly define the potential functions $\mathcal{E}_f^2(s_2)$ and $\mathcal{E}_f^3(s_3)$. These potential functions are well-defined. Moreover, we have the following proposition.
	
	\begin{proposition}\label{pro1}
		These potential functions are strictly convex. 
	\end{proposition}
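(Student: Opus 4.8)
The plan is to prove strict convexity of each potential function by showing its Hessian is positive definite, and to arrange the argument so that all the lower-dimensional potentials ($\mathcal{E}_f^i$ on $\mathbb{R}^2$ or $\mathbb{R}$) are handled at once by the generic three-variable case. First I would record that, by the very definition $\mathcal{E}_f=\int\omega_f$ with $\omega_f=\sum_{i=1}^3 T_i^f\,\mathrm{d}s_i$, the gradient of $\mathcal{E}_f$ in the coordinates $(s_1,s_2,s_3)$ is $(T_1^f,T_2^f,T_3^f)$, so its Hessian is the Jacobian
\[
\Lambda=\left(\frac{\partial T_i^f}{\partial s_j}\right)_{1\le i,j\le 3}.
\]
Because $T_i^f=k_i l_i^f$ and $s_i=\ln k_i$, the symmetry $\Lambda_{ij}=\Lambda_{ji}$ is precisely the closedness of $\eta=\sum_i l_i^f\,\mathrm{d}k_i$ already proved in Lemma \ref{l2} (equivalently $\partial l_i^f/\partial k_j=\partial l_j^f/\partial k_i$). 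I would then note that the potential functions with one or two of the $k_i$ held constant have Hessians equal to the corresponding principal submatrices of $\Lambda$, since in both settings the derivatives $\partial T_i^f/\partial s_j$ over the remaining indices are computed with the frozen variables fixed. Hence strict convexity in every case reduces to showing that $\Lambda$ is positive definite at every $k\in\mathbb{R}_{>0}^3$, as positive definiteness of $\Lambda$ forces positive definiteness of all its principal submatrices.

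Next I would compute the entries of $\Lambda$ explicitly. Using Table \ref{relation} I would rewrite $T_i^f$ in terms of the radius $r_i=r(k_i)$ and the inner angle $\theta_i$ of the hyperbolic triangle $\hat f$, where the angles $\theta_i$ are determined by the side lengths $l_{ij}=r(k_i)+r(k_j)$ via the hyperbolic law of cosines. Differentiating $\theta_i$ with respect to the $l_{jk}$ and chaining through $l_{ij}=r_i+r_j$, $r_i=r(k_i)$, and $s_i=\ln k_i$ then produces $\Lambda$. The expected outcome is the sign pattern $\Lambda_{ii}>0$ and $\Lambda_{ij}<0$ for $i\ne j$, reflecting the geometric monotonicity that enlarging a circle's own curvature increases its total geodesic curvature while enlarging a neighbour's curvature decreases it.

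With this sign pattern in hand, I would establish positive definiteness by one of two routes. The first is to prove strict diagonal dominance, $\Lambda_{ii}>\sum_{j\ne i}|\Lambda_{ij}|$, which for a symmetric matrix with positive diagonal immediately yields positive definiteness; the required inequality should follow from comparing the rate at which $\theta_i$ changes along its own radius against its rate of change along the neighbours' radii. The alternative, which exploits the structure already used in Lemma \ref{l2}, is to pass to the dual circle: introducing $s_f=\ln k_f$ and using the symmetric relations $\partial l_i^f/\partial k_f=\partial\tilde l_i^f/\partial k_i$ (Lemma 2.7 of \cite{BHS}) yields an enlarged symmetric matrix in the four variables $(s_1,s_2,s_3,s_f)$, of which $\Lambda$ is a Schur complement, so positive definiteness of $\Lambda$ follows from the semidefiniteness of the enlarged object.

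The main obstacle I anticipate is not the symmetry or the gradient identification, which are essentially free, but establishing positive definiteness uniformly across the ten configuration types listed in the Remark. Each combination of circle, horocycle, and hypercycle changes the precise trigonometric formulas (for instance $T_i^f=\theta_i\cosh r_i$ in the circle case versus $T_i^f=\theta_i\sinh r_i$ in the hypercycle case), and the horocycle boundary $k_i=1$ forces $r_i=\infty$, so the expressions in Table \ref{relation} degenerate there and must be treated by a separate limiting computation. Verifying the key diagonal-dominance (or Schur-complement) inequality in a way that survives all these cases, including continuity across the $k_i=1$ transitions, is where the real work of the proof will lie.
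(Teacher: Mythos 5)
Your proposal is correct in outline and its core coincides with the paper's: identify the Hessian of each potential with the Jacobian $\bigl(\partial T_i^f/\partial s_j\bigr)$, get symmetry from the closedness of the $1$-form, and conclude positive definiteness from the sign pattern plus strict diagonal dominance. The genuine difference is in how the lower-dimensional potentials are handled. You reduce $\mathcal{E}_f^i$ on $\mathbb{R}^2$ and $\mathbb{R}$ to principal submatrices of the full $3\times 3$ Jacobian, whose positive definiteness is exactly Lemma 2.11 of \cite{BHS}; since principal submatrices of a symmetric positive definite matrix are positive definite, this disposes of all ten configuration types at once and is arguably cleaner than the paper, which instead reruns the diagonal-dominance argument for the $2\times 2$ case by hand. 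The paper's rerun, however, exposes the mechanism you leave vague: strict column dominance does not come from "comparing the rate at which $\theta_i$ changes along its own radius against the neighbours' radii," but from the Gauss--Bonnet identity $\operatorname{Area}(\Omega_f)=\pi-\sum_i T_i^f$, which turns the column sum $\sum_j \partial T_j^f/\partial s_i$ into $-\partial\operatorname{Area}(\Omega_f)/\partial s_i>0$. If you pursue your plan of re-deriving the sign pattern and dominance from explicit hyperbolic trigonometry across all circle/horocycle/hypercycle combinations, you are re-proving Lemma 2.11 of \cite{BHS}, which the paper simply cites; that is legitimate but substantially more work than needed, and you should replace the angle-rate heuristic by the Gauss--Bonnet column-sum argument (or your dual-circle Schur-complement alternative, which is in the spirit of Lemma \ref{l2}) to make the dominance step actually close.
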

	
	\begin{proof}
		If $k_1,k_2$ and $k_3$ are not constants, then we have 
		$$
		\nabla\mathcal{E}_f=(T_1^f,T_2^f,T_3^f)^{\prime},~~\text{Hess}~\mathcal{E}_f=\left(\frac{\partial T_i^f}{\partial s_j}\right)_{3\times 3}. $$
		By Lemma 2.11 in \cite{BHS}, we have that $\text{Hess}~\mathcal{E}_f$ is positive definite and $\mathcal{E}_f(s_1,s_2,s_3)$ is strictly convex.
		
		If only $k_3$ is a constant, for the potential function $\mathcal{E}_f^3(s_1,s_2)$, we have
		$$
		\nabla\mathcal{E}_f^3=(T_1^f,T_2^f)^{\prime},~~\text{Hess}~\mathcal{E}_f^3=\left(\frac{\partial T_i^f}{\partial s_j}\right)_{2\times 2}, $$
		where $T_i^f=T_i^f(s_1,s_2), i=1,2$. Let $\Omega_f$ be the region enclosed by sub-arcs $C_i^f, i=1,2,3$ as shown in Figure \ref{11}. 
		\begin{figure}[htbp]
			\centering
			\includegraphics[scale=0.30]{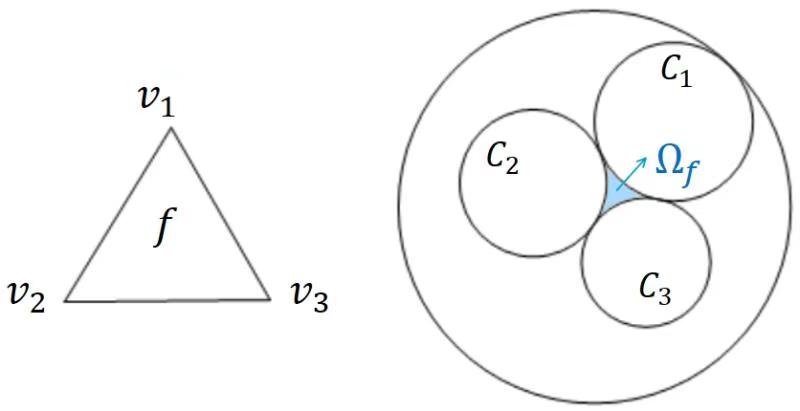}
			%\caption{B}
			\captionof{figure}{\small An example of face $f$ and region $\Omega_{f}$.}
			\label{11}
		\end{figure}

		By Gauss-Bonnet theorem, we have 
		\begin{equation}\label{g1}
			\area(\Omega_f)=\pi-\sum_{i=1}^{3}T_i^f.
		\end{equation}
		By Lemma 2.11 in \cite{BHS}, we have  
		$$
		\frac{\partial T_i^f}{\partial s_i}>0,~~i=1,2,
		$$
		and 
		$$
		\frac{\partial T_i^f}{\partial s_j}<0,~~1\le i\ne j\le 2.
		$$
		Then we obtain 
		\begin{equation}\label{g2}
			\left\vert\frac{\partial T_1^f}{\partial s_1}\right\vert-\left\vert\pp{T_2^f}{s_1}\right\vert=\pp{T_1^f}{s_1}+\pp{T_2^f}{s_1}=\pp{(T_1^f+T_2^f)}{s_1}.
		\end{equation}
		By (\ref{g1}), we have 
		$$
		T_1^f+T_2^f=\pi-T_3^f-\area(\Omega_f),$$
		and
		\begin{equation}\label{g3}
			\pp{(T_1^f+T_2^f)}{s_1}=-\pp{T_3^f}{s_1}-\pp{\area(\Omega_f)}{s_1}.
		\end{equation}
		By Lemma 2.11 in \cite{BHS}, we have 
		\begin{equation}\label{g4}
			\pp{T_3^f}{s_1}<0,~~\pp{\area(\Omega_f)}{s_1}<0.
		\end{equation}
		By (\ref{g2}), (\ref{g3}) and (\ref{g4}), we obtain
		$$
		\left\vert\frac{\partial T_1^f}{\partial s_1}\right\vert-\left\vert\pp{T_2^f}{s_1}\right\vert=\pp{(T_1^f+T_2^f)}{s_1}>0.
		$$
		Using above similar argument, we have
		$$
		\left\vert\frac{\partial T_2^f}{\partial s_2}\right\vert-\left\vert\pp{T_1^f}{s_2}\right\vert=\pp{(T_1^f+T_2^f)}{s_2}>0.
		$$
		Then we know that $\text{Hess}~\mathcal{E}_f^3$ is a symmetric strictly diagonally dominant matrix with positive diagonal entries, which yields that $\text{Hess}~\mathcal{E}_f^3$ is positive definite. Hence the potential function $\mathcal{E}_f^3(s_1,s_2)$ is strictly convex. We can also similarly prove that potential functions $\mathcal{E}_f^1(s_2,s_3)$ and $\mathcal{E}_f^2(s_1,s_3)$ are strictly convex. 
		
		If only $k_1$ is not a constant, for the potential function $\mathcal{E}_f^1(s_1)$, we have $\frac{\mathrm{d}\mathcal{E}_f^1}{\mathrm{d}s_1}=T_1^f$, then by Lemma 2.11 in \cite{BHS}, we have $$\frac{\mathrm{d}^2\mathcal{E}_f^1}{\mathrm{d}s_1^2}=\frac{\mathrm{d}T_1^f}{\mathrm{d}s_1}>0.
		$$
		Hence the potential function $\mathcal{E}_f^1(s_1)$ is strictly convex. We can also similarly prove that potential functions $\mathcal{E}_f^2(s_2)$ and $\mathcal{E}_f^3(s_3)$ are strictly convex. 
	\end{proof}
	
	\section{The potential function for generalized circle packing}\label{s2}
	
	Given a surface $S_{g,n}$ with boundary and triangulation $\mathcal{T}=(V,E,F)$ as in section \ref{s1}. By $V^{\circ}$ we denote the set of interior vertices in $V$ and by $V^{\partial}$ we denote the set of boundary vertices in $V$, then we have $V=V^{\circ}\cup V^{\partial}$. 
	
	All the vertices in $V^{\circ}$ are ordered one by one, marked by $1,\cdots,|V^{\circ}|$ and all the vertices in $V^{\partial}$ are ordered one by one, marked by $1,\cdots,|V^{\partial}|$, where $|V^{\circ}|$ and $|V^{\partial}|$ are the number of vertices in $V^{\circ}$ and the number of vertices in $V^{\partial}$, respectively.
	
	Given geodesic curvatures \(\hat{k} = (k_1, \ldots, k_{|V^{\partial}|}) \in \mathbb{R}^{|V^{\partial}|}_{>0}\) on \(V^{\partial}\), we can define the total geodesic curvature \(T_i\) at the vertex \(i \in V^{\circ}\) for any \(k = (k_1, \ldots, k_{|V^{\circ}|}) \in \mathbb{R}^{|V^{\circ}|}_{>0}\) on \(V^{\circ}\). Specifically, we have:
	
	\[
	T_i := \sum_{f \in F_{\{i\}}} T^f_{i}
	\]
	where \(F_{\{i\}}\) is the set of all faces containing \(i\) as one of its vertices.

	By $F_i$ we denote the set $F_i= \{f \in F~|~|V_f\cap V^{\partial}|=i\}$, then we know that $F_1\cup F_2\ne \emptyset$ and $F=F_0\cup F_1\cup F_2$. For any face $f\in F$, we can suppose that $V_f=\{v_1(f),v_2(f),v_3(f)\}$. For any face $f\in F_1$, we can suppose that $V_f\cap V^{\partial}=\{v_1(f)\}$ and for any face $f\in F_2$, we can suppose that $V_f\cap V^{\partial}=\{v_2(f),v_3(f)\}$. 
	
	Using change of variables $s_i=\ln k_i$, $i=1,\cdots,|V^{\circ}|$ and $\hat{s}_i=\ln \hat{k}_i$, $i=1,\cdots,|V^{\partial}|$, we consider the new variable $s=(s_1,\cdots,s_{|V^{\circ}|})\in \mathbb{R}^{|V^{\circ}|}$, then we can define a potential function 
	$$
	\mathcal{E}(s):=\sum_{f\in F_0}\mathcal{E}_f(s_{v_1(f)},s_{v_2(f)},s_{v_3(f)})+\sum_{f\in F_1}\mathcal{E}_f^{v_1(f)}(s_{v_2(f)},s_{v_3(f)})+\sum_{f\in F_2}\mathcal{E}_f^{v_1(f)}(s_{v_1(f)}).
	$$
	Moreover, for $1\le i\le |V^{\circ}|$, we have 
	\begin{align*}
		\pp{\mathcal{E}(s)}{s_i}&=\sum_{f\in F_0\cap F_{\{i\}}}\pp{\mathcal{E}_f(s_{v_1(f)},s_{v_2(f)},s_{v_3(f)})}{s_i}+\sum_{f\in F_1\cap F_{\{i\}}}\pp{\mathcal{E}_f^{v_1(f)}(s_{v_2(f)},s_{v_3(f)})}{s_i}+\sum_{f\in F_2\cap F_{\{i\}}}\pp{\mathcal{E}_f^{v_1(f)}(s_{v_1(f)})}{s_i}\\
		&=\sum_{f\in F_0\cap F_{\{i\}}}T_i^f(s_{v_1(f)},s_{v_2(f)},s_{v_3(f)})+\sum_{f\in F_1\cap F_{\{i\}}}T_i^f(s_{v_2(f)},s_{v_3(f)})+\sum_{f\in F_2\cap F_{\{i\}}}T_i^f(s_{v_1(f)})\\
		&=\sum_{f\in F_{\{i\}}}T_i^f\\
		&=T_i.
	\end{align*}
	Then we have $\nabla\mathcal{E}=(T_1,\cdots,T_{|V^{\circ}|})$ and the Hessian of $\mathcal{E}$ is a Jacobi matrix, i.e.
\begin{equation}\label{ja}	
 \text{Hess}~\mathcal{E}=M=\begin{pmatrix}
		\frac{\partial T_1}{\partial s_1}&\cdots&\frac{\partial T_1}{\partial s_{|V^{\circ}|}}\\
		\vdots&\ddots&\vdots\\
		\frac{\partial T_{|V^{\circ}|}}{\partial s_{1}}&\cdots&\frac{\partial T_{|V^{\circ}|}}{\partial s_{|V^{\circ}|}}\\
	\end{pmatrix}.
 \end{equation}
	
	\begin{proposition}\label{pro3}
		The Jacobi matrix $M$ is positive definite.
	\end{proposition}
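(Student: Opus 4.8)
The plan is to exploit the additive structure of $\mathcal{E}$ over faces together with the local strict convexity already established in Proposition~\ref{pro1}. Since $\mathcal{E}$ is a sum of local potential functions, one attached to each face, its Hessian decomposes as
$$
M = \text{Hess}\,\mathcal{E} = \sum_{f\in F} M_f,
$$
where $M_f$ is the Hessian of the local potential function associated with $f$ (namely $\mathcal{E}_f$ if $f\in F_0$, $\mathcal{E}_f^{v_1(f)}$ if $f\in F_1$, and $\mathcal{E}_f^{v_1(f)}$ if $f\in F_2$), regarded as a symmetric $|V^{\circ}|\times|V^{\circ}|$ matrix obtained by padding with zeros all rows and columns indexed by vertices that are not interior vertices of $f$ (boundary vertices and vertices not belonging to $f$). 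By Proposition~\ref{pro1} each local potential function is strictly convex, so every $M_f$ is positive semidefinite on $\mathbb{R}^{|V^{\circ}|}$ and, more precisely, strictly positive definite when restricted to the coordinate subspace $W_f$ spanned by the interior vertices of $f$.

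First I would observe that $M$ is positive semidefinite, being a sum of positive semidefinite matrices; the only remaining task is to rule out a nontrivial kernel. Suppose $x\in\mathbb{R}^{|V^{\circ}|}$ satisfies $x'Mx=0$. Because each summand satisfies $x'M_f x\ge 0$, the total sum can vanish only if $x'M_f x=0$ for every face $f$ separately. Using the strict positive definiteness of $M_f$ on $W_f$, this forces the components $x_v$ to vanish for every interior vertex $v$ of $f$.

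Next I would propagate this vanishing across the whole triangulation. Every interior vertex $v\in V^{\circ}$ lies in at least one face $f\in F_{\{v\}}$, and in that face $v$ is one of the interior vertices, hence an active coordinate of $M_f$. Consequently $x_v=0$ for all $v\in V^{\circ}$, that is $x=0$, so $\ker M=\{0\}$ and $M$ is positive definite.

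The step requiring the most care is the bookkeeping of the zero-padding: Proposition~\ref{pro1} supplies strict convexity for the $3\times3$, $2\times2$, and $1\times1$ local problems in their own variables, and one must verify that, after padding and summing, the quadratic form $x'M_f x$ coincides with the strictly positive form in the interior coordinates of $f$, so that its vanishing genuinely forces those coordinates to be zero. Everything else is soft: positive semidefiniteness of the sum is automatic, and the only real geometric input, strict positivity along the interior directions of each face, is exactly Proposition~\ref{pro1}.
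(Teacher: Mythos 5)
Your proof is correct, but it takes a genuinely different route from the paper's. The paper assembles the global matrix $M=(\partial T_i/\partial s_j)$ directly and shows it is a symmetric strictly diagonally dominant matrix with positive diagonal entries: it first establishes the signs $\partial T_i/\partial s_i>0$ and $\partial T_i/\partial s_j\le 0$ for $i\ne j$, and then bounds $\vert\partial T_i/\partial s_i\vert-\sum_{j\ne i}\vert\partial T_j/\partial s_i\vert$ from below by regrouping the sum over faces and applying Gauss--Bonnet in each face, $\sum_{v\in V^{\circ}\cap V_f}\partial T_v^f/\partial s_i\ge-\partial\operatorname{Area}(\Omega_f)/\partial s_i>0$. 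You instead write $M=\sum_f M_f$ as a sum of zero-padded local Hessians and deduce positive definiteness from positive semidefiniteness plus triviality of the kernel, using that every interior vertex lies in at least one face. This is softer and shorter, and it cleanly delegates all of the geometric input to Proposition \ref{pro1}; the paper's computation, by contrast, also records the off-diagonal sign pattern and the diagonal-dominance structure of $M$, which is exactly the kind of quantitative information exploited again later (e.g.\ in the proof of the maximum principle). One citation-level caveat: strict convexity of a $C^2$ function only guarantees a positive semidefinite Hessian (consider $x^4$ at the origin), so for the step where you need $M_f$ strictly positive definite on $W_f$ you should invoke what the proof of Proposition \ref{pro1} actually establishes --- positive definiteness of each local Hessian, via Lemma 2.11 of \cite{BHS} and diagonal dominance --- rather than the strict convexity stated in the proposition itself.
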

	
	\begin{proof}
		For any $1\le i\le |V^{\circ}|$, we have
		\begin{equation}\label{e1}
			\pp{T_i}{s_i}=\sum_{f\in F_0\cap F_{\{i\}}}\pp{T_i^f(s_{v_1(f)},s_{v_2(f)},s_{v_3(f)})}{s_i}+\sum_{f\in F_1\cap F_{\{i\}}}\pp{T_i^f(s_{v_2(f)},s_{v_3(f)})}{s_i}+\sum_{f\in F_2\cap F_{\{i\}}}\pp{T_i^f(s_{v_1(f)})}{s_i}.
		\end{equation}
		By Lemma 2.11 in \cite{BHS} and Proposition \ref{pro1}, we have 
		$$
		\pp{T_i^f(s_{v_1(f)},s_{v_2(f)},s_{v_3(f)})}{s_i}>0~~ \pp{T_i^f(s_{v_2(f)},s_{v_3(f)})}{s_i}>0,~~ \pp{T_i^f(s_{v_1(f)})}{s_i}>0. 
		$$
		Since there exists at least one of three sums in (\ref{e1}), then we obtain that $\frac{\partial T_i}{\partial s_i}>0$ for $1\le i\le |V^{\circ}|$. 
		
		For $1\le i\ne j\le |V^{\circ}|$, we have 
		$$
		\frac{\partial T_i}{\partial s_j}=\sum_{f\in F_0\cap F_{\{i\}}}\pp{T_i^f(s_{v_1(f)},s_{v_2(f)},s_{v_3(f)})}{s_j}+\sum_{f\in F_1\cap F_{\{i\}}}\pp{T_i^f(s_{v_2(f)},s_{v_3(f)})}{s_j}.
		$$
		For any $f\in F_0\cap F_{\{i\}}$, we know that $i\in V_f$. If $j\in V_f$, by Lemma 2.11 in \cite{BHS}, then we have $$\pp{T_i^f(s_{v_1(f)},s_{v_2(f)},s_{v_3(f)})}{s_j}<0.$$ 
		If $j\notin V_f$, then we have 
		$$\pp{T_i^f(s_{v_1(f)},s_{v_2(f)},s_{v_3(f)})}{s_j}=0.$$
		For any $f\in F_1\cap F_{\{i\}}$, we know that $i\in V_f$. If $j\in V_f$, by Proposition \ref{pro1}, then we have
		$$
		\pp{T_i^f(s_{v_2(f)},s_{v_3(f)})}{s_j}<0.
		$$
		If $j\notin V_f$, then we have
		$$
		\pp{T_i^f(s_{v_2(f)},s_{v_3(f)})}{s_j}=0.
		$$
		Then we obtain $\frac{\partial T_i}{\partial s_j}\le 0$ for $1\le i\ne j\le |V^{\circ}|$.
		
		Hence for $1\le i\le |V^{\circ}|$, we have
		\begin{align*}
			\left|\frac{\partial T_i}{\partial s_i}\right|-\sum_{1\le j \ne i\le |V^{\circ}|}\left|\frac{\partial T_j}{\partial s_i}\right|&=\frac{\partial T_i}{\partial s_i}+\sum_{1\le j \ne i\le |V^{\circ}|}\frac{\partial T_j}{\partial s_i}=\sum_{v\in V^{\circ}}\pp{T_v}{s_i}=\pp{(\sum_{v\in V^{\circ}}T_v)}{s_i}\\
			&=\pp{(\sum_{v\in V^{\circ}}\sum_{f\in F_{\{v\}}}T_v^f)}{s_i}=\sum_{v\in V^{\circ}}\sum_{f\in F_{\{v\}}}\pp{T_v^f}{s_i}\\
			&=\sum_{f\in F_{\{i\}}}\sum_{v\in V^{\circ}\cap V_f}\pp{T_v^f}{s_i}.
		\end{align*}
		For any $f\in F_{\{i\}}$, we know that $i\in V^{\circ}\cap V_f$. Then we can suppose that $V_f=\{i,j,k\}$.\\ 
		If $V^{\circ}\cap V_f=\{i\}$, by Proposition \ref{pro1}, we have
		$$
		\sum_{v\in V^{\circ}\cap V_f}\pp{T_v^f}{s_i}=\pp{T_i^f}{s_i}>0.
		$$
		If $V^{\circ}\cap V_f=\{i,j\}$, then we have 
		$$
		\sum_{v\in V^{\circ}\cap V_f}\pp{T_v^f}{s_i}=\pp{T_i^f}{s_i}+\pp{T_j^f}{s_i}.
		$$
		By Gauss-Bonnet theorem, we have
		\begin{equation}\label{g2}
			\area(\Omega_f)=\pi-T_i^f-T_j^f-T_k^f,
		\end{equation}
		then we obtain 
		$$
		\pp{T_i^f}{s_i}+\pp{T_j^f}{s_i}=-\pp{T_k^f}{s_i}-\pp{\area(\Omega_f)}{s_i}.
		$$
		By Lemma 2.11 in \cite{BHS}, we have
		$$
		\pp{T_k^f}{s_i}<0,~~\pp{\area(\Omega_f)}{s_i}<0,
		$$
		then we obtain
		$$
		\sum_{v\in V^{\circ}\cap V_f}\pp{T_v^f}{s_i}=\pp{T_i^f}{s_i}+\pp{T_j^f}{s_i}>0.
		$$
		If $V^{\circ}\cap V_f=\{i,j,k\}$, then by (\ref{g2}) and Lemma 2.11 in \cite{BHS}, we have 
		$$
		\sum_{v\in V^{\circ}\cap V_f}\pp{T_v^f}{s_i}=\pp{T_i^f}{s_i}+\pp{T_j^f}{s_i}+\pp{T_k^f}{s_i}=-\pp{\area(\Omega_f)}{s_i}>0.
		$$
		Then we obtain
		$$
		\left|\frac{\partial T_i}{\partial s_i}\right|-\sum_{1\le j \ne i\le |V^{\circ}|}\left|\frac{\partial T_j}{\partial s_i}\right|=\sum_{f\in F_{\{i\}}}\sum_{v\in V^{\circ}\cap V_f}\pp{T_v^f}{s_i}>0.
		$$
		Hence $M$ is a strictly diagonally dominant matrix with positive diagonal entries, i.e. $M$ is positive definite.
	\end{proof}
	
	\begin{corollary}\label{coro1}
		The potential function $\mathcal{E}$ is strictly convex on $\mathbb{R}^{|V^{\circ}|}$. 
	\end{corollary}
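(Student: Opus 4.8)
The plan is to deduce this directly from Proposition \ref{pro3}, since the analytic content has already been established there. The essential observation is that the potential function $\mathcal{E}$ was constructed by integrating the closed smooth $1$-form $\omega_f$ (summed over faces), so $\mathcal{E}$ is a well-defined $C^2$ (indeed smooth) function on the full Euclidean space $\mathbb{R}^{|V^{\circ}|}$, with $\nabla\mathcal{E} = (T_1,\dots,T_{|V^{\circ}|})$ and, by equation (\ref{ja}), $\mathrm{Hess}\,\mathcal{E} = M$.

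The key step is then to invoke the standard criterion from convex analysis: a twice continuously differentiable function on a convex open set whose Hessian is positive definite at every point is strictly convex on that set. Proposition \ref{pro3} supplies exactly this hypothesis — it shows that $M$ is positive definite, and its proof proceeds pointwise via the strictly diagonally dominant structure of $M$ together with the sign conditions from Lemma 2.11 in \cite{BHS} and Proposition \ref{pro1}, so the conclusion holds at each $s \in \mathbb{R}^{|V^{\circ}|}$ rather than at an isolated point. Combining these, $\mathrm{Hess}\,\mathcal{E}(s) = M(s)$ is positive definite for all $s$, and strict convexity follows immediately.

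I do not anticipate any genuine obstacle here, since the corollary is a formal consequence of Proposition \ref{pro3}: the difficulty was entirely absorbed into establishing the positive definiteness of the Jacobi matrix $M$, which in turn rested on the Gauss--Bonnet identity (\ref{g2}) and the monotonicity of the total geodesic curvatures $T_i^f$. The only point worth stating carefully is that the positive definiteness is uniform in the sense of holding throughout $\mathbb{R}^{|V^{\circ}|}$, and that the domain is convex (it is all of $\mathbb{R}^{|V^{\circ}|}$), so that the infinitesimal convexity criterion genuinely integrates up to global strict convexity of $\mathcal{E}$.
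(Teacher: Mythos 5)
Your proposal is correct and follows exactly the paper's own argument: the corollary is deduced directly from Proposition \ref{pro3} by the standard fact that a $C^2$ function on the convex set $\mathbb{R}^{|V^{\circ}|}$ with everywhere positive definite Hessian is strictly convex. Your additional remarks on smoothness and pointwise positive definiteness are consistent with, and merely elaborate on, the paper's one-line proof.
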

	
	\begin{proof}
		By Proposition \ref{pro3}, $\text{Hess}~\mathcal{E}$ is positive definite, then $\mathcal{E}$ is strictly convex on $\mathbb{R}^{|V^{\circ}|}$. 
	\end{proof}
	
	\section{Existence and rigidity of generalized circle packing on $S_{g,n}$ }\label{s3}

	We need the following result in convergence of total geodesic curvatures of arcs.  
	
	\begin{lemma}(\cite{BHS}, Lemma 2.12)\label{2.12}
		Given a face $f\in F$ with a vertex set $V_{f}$ consisting of $v_1,v_2,v_3$. By $\mathcal{P}=\{C_i\}_{i=1}^3$ we denote the generalized circle packing with geodesic curvatures $k=(k_1,k_2,k_3)\in\mathbb{R}^3_{>0}$ of the face $f$ and by $T_i^f$ we denote the total geodesic curvature of sub-arc $C_i^f$ of $C_i$ contained in dual circle $C_f$, $i=1,2,3$. Let $0 \leq a,b<+\infty$ and $\{r, s, t\}=\{1,2,3\}$, then the following statements hold:
		\begin{equation}\label{3.3}
			\lim _{k_r \rightarrow 0} T_r^f=0,
		\end{equation}
		\begin{equation}\label{3.4}
			\lim _{\left(k_r, k_s, k_t\right) \rightarrow(+\infty, a, b)} T_r^f=\pi,
		\end{equation}
		\begin{equation}\label{3.5}
			\lim _{\left(k_r, k_s, k_t\right) \rightarrow(+\infty,+\infty, a)} T_r^f+T_s^f=\pi,
		\end{equation}
		\begin{equation}\label{3.6}
			\lim _{\left(k_r, k_s, k_t\right) \rightarrow(+\infty,+\infty,+\infty)} T_r^f+T_s^f+T_t^f=\pi.
		\end{equation}
	\end{lemma}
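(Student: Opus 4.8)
The plan is to reduce all four limits to the Gauss--Bonnet identity already used in the previous sections,
\[
\area(\Omega_f) = \pi - \big(T_1^f + T_2^f + T_3^f\big),
\]
together with the a priori positivity $T_i^f > 0$ and $\area(\Omega_f) > 0$, which forces $0 < T_i^f < \pi$. Rewriting the identity as $T_1^f + T_2^f + T_3^f = \pi - \area(\Omega_f)$ converts each assertion into a statement about which terms vanish: (\ref{3.3}) is exactly $T_r^f \to 0$; (\ref{3.4}) follows from $\area(\Omega_f),\,T_s^f,\,T_t^f \to 0$; (\ref{3.5}) from $\area(\Omega_f),\,T_t^f \to 0$; and (\ref{3.6}) from $\area(\Omega_f) \to 0$. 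Thus the whole lemma rests on two degeneration mechanisms, one governing $k_i \to 0$ and one governing $k_i \to +\infty$.

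For (\ref{3.3}) I would argue directly from the arc data. When $k_r \to 0$ the generalized circle $C_r$ is a hypercycle of radius $r_r = \arctanh k_r \to 0$, so by the last line of Table~\ref{relation},
\[
T_r^f = l_r^f\, k_r = \theta_r \cosh r_r \cdot \tanh r_r = \theta_r \sinh r_r,
\]
where $\theta_r$ is the inner angle (projection length) of the sub-arc. Since $\sinh r_r \to 0$, it suffices to bound $\theta_r$; this I would get by estimating $\theta_r$ through the hyperbolic distance between the two tangent points $p_{rs},p_{rt}$ on $C_r$, which stays finite because it is controlled by the side lengths $l_{rs}=r_r+r_s$ and $l_{rt}=r_r+r_t$ of $\hat f$.

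For (\ref{3.4})--(\ref{3.6}) the driving fact is that $k_i \to +\infty$ forces $C_i$ (a genuine circle, since $k_i>1$) to collapse to a point $v_i$, its radius being $r_i=\arccoth k_i \to 0$. Each tangent point on $C_i$ lies at distance $r_i$ from $v_i$ and hence converges to $v_i$; moreover any other circle tangent to the collapsing $C_i$ must meet the limit point $v_i$ while remaining externally tangent to its neighbours (so meeting each only once), so the mutual tangencies force all three tangent points of the configuration to a single limit point. Consequently the curvilinear triangle $\Omega_f$ is pinched and $\area(\Omega_f)\to 0$; furthermore, for any vertex $j$ whose curvature stays bounded, its two tangent points coalesce, so the bounding sub-arc $C_j^f$ has length $l_j^f\to 0$ and $T_j^f = l_j^f k_j \to 0$. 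Substituting these vanishings into the Gauss--Bonnet identity yields (\ref{3.4}), (\ref{3.5}) and (\ref{3.6}) in turn.

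The main obstacle is to make this collapse argument precise and uniform over the ten combinatorial types of $\hat f$, since ``radius'', ``center'' and ``inner angle'' are defined differently for circles, horocycles and hypercycles, and the heuristic ``pinching gap'' must be turned into genuine estimates on $\area(\Omega_f)$ and on the arc lengths $l_j^f$. I expect the most robust route is to record closed-form expressions for $T_i^f$ and $\area(\Omega_f)$ in terms of $r_i = r(k_i)$ via hyperbolic trigonometry of $\hat f$ (law of cosines with sides $l_{ij}=r_i+r_j$), to check that these expressions extend continuously across the transition values $k_i = 1$, and then to pass to the limit; both the boundedness of $\theta_r$ in (\ref{3.3}) and the vanishing of $\area(\Omega_f)$ and $l_j^f$ in (\ref{3.4})--(\ref{3.6}) should then reduce to elementary asymptotics.
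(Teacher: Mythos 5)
The paper does not actually prove this lemma: it is imported verbatim from \cite{BHS} (Lemma 2.12), so there is no in-paper argument to measure your proposal against. Judged on its own terms, your strategy is reasonable and is consistent with how the paper manipulates these quantities elsewhere: the Gauss--Bonnet identity $\area(\Omega_f)=\pi-\sum_i T_i^f$ together with $T_i^f>0$, $\area(\Omega_f)>0$ is exactly the reduction used in Proposition \ref{pro1} and Theorem \ref{thm1}, and the computation $T_r^f=\theta_r\cosh r_r\cdot\tanh r_r=\theta_r\sinh r_r$ for the hypercycle case of (\ref{3.3}) agrees with Table \ref{relation}.

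Two steps need more than you give them. First, in (\ref{3.3}) you propose to bound $\theta_r$ ``by the side lengths $l_{rs}=r_r+r_s$, $l_{rt}=r_r+r_t$'', but these are not uniformly bounded ($r_s=\arctanh k_s\to+\infty$ as $k_s\to 1^-$), and the lemma is invoked in the proof of Theorem \ref{thm1} along sequences in which the other two curvatures are completely unconstrained; so you must either prove the limit uniformly in $(k_s,k_t)$ or pass to subsequences, and in either case a qualitative appeal to ``the distance between the tangent points stays finite'' is not enough --- this is precisely where an explicit formula for $T_r^f$ is needed. Second, for (\ref{3.4})--(\ref{3.6}) the delicate part of your pinching argument (showing that the tangent point $p_{st}$ of the two non-collapsing circles also migrates to the limit point) can be bypassed using a fact the paper itself quotes later in the proof of Theorem \ref{thm6}, namely $k_f^2=k_rk_s+k_sk_t+k_rk_t+1$ from \cite{cr}: if any $k_i\to+\infty$ then $k_f\to+\infty$, so the dual circle $C_f$ collapses, $r_f=\arccoth k_f\to 0$. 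Since by definition every sub-arc $C_j^f$, and hence $\Omega_f$, is contained in the disk bounded by $C_f$, this yields $\area(\Omega_f)\to 0$ directly and $l_j^f\to 0$ for every vertex $j$ whose curvature stays bounded, after which your Gauss--Bonnet bookkeeping closes (\ref{3.4}), (\ref{3.5}) and (\ref{3.6}). With those two repairs the outline is a workable substitute for the cited proof.
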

	
	For a subset $I\subset V^{\circ}$, by $F_I$ we denote the set
	$$
	F_I=\{f\in F~|~\exists v\in I, s.t. ~v\in V_f\}.
	$$
	Then we have the following theorem, i.e.
	
	\begin{theorem}\label{thm1}
		$\nabla\mathcal{E}$ is a homeomorphism from $\mathbb{R}^{|V^{\circ}|}$ to $\mathfrak{T}$, where 
		$$
		\mathfrak{T}=\{(T_1,\cdots,T_{|V^{\circ}|})\in \mathbb{R}^{|V^{\circ}|}_{>0}~|~\sum_{v\in I}T_v<\pi|F_I|, \forall I\subset V^{\circ}\}.
		$$
	\end{theorem}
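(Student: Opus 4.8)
The plan is to show that $\nabla\mathcal{E}$ is an injective local diffeomorphism whose image is an open subset of $\mathfrak{T}$, and then to identify this image with all of $\mathfrak{T}$ by an open--closed argument exploiting the convexity (hence connectedness) of $\mathfrak{T}$. Write $\mathcal{W} := \nabla\mathcal{E}(\mathbb{R}^{|V^{\circ}|})$ for the image.

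First I would record the easy ingredients. Injectivity is immediate from Corollary \ref{coro1}: a strictly convex function has an injective gradient, since $\langle \nabla\mathcal{E}(s)-\nabla\mathcal{E}(s'),\, s-s'\rangle>0$ whenever $s\neq s'$. Openness of $\mathcal{W}$ follows from Proposition \ref{pro3}: since $\operatorname{Hess}\mathcal{E}=M$ is positive definite everywhere, $\nabla\mathcal{E}$ is a local diffeomorphism, hence an open map. To see $\mathcal{W}\subseteq\mathfrak{T}$, note each $T_v=\sum_{f\in F_{\{v\}}}T_v^f>0$ because every $T_v^f>0$, and for any $I\subset V^{\circ}$ I would reorganize the sum by faces,
$$\sum_{v\in I}T_v=\sum_{f\in F_I}\sum_{v\in I\cap V_f}T_v^f,$$
then apply the Gauss--Bonnet relation $\operatorname{Area}(\Omega_f)=\pi-\sum_{v\in V_f}T_v^f$ together with $\operatorname{Area}(\Omega_f)>0$ and positivity of each $T_v^f$ to conclude $\sum_{v\in I\cap V_f}T_v^f<\pi$ for every face; summing over $f\in F_I$ gives $\sum_{v\in I}T_v<\pi|F_I|$. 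Since $\mathfrak{T}$ is cut out by linear inequalities it is convex, hence connected.

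The heart of the argument is to show that $\mathcal{W}$ is closed in $\mathfrak{T}$, which amounts to a properness statement: whenever $T^{(n)}=\nabla\mathcal{E}(s^{(n)})\to T^{*}\in\mathfrak{T}$, the preimages $s^{(n)}$ stay bounded. Assuming not, I would pass to a subsequence along which each coordinate $s_i^{(n)}$ converges in $[-\infty,+\infty]$, and set $A=\{i:s_i^{(n)}\to-\infty\}$ (so $k_i\to 0$) and $B=\{i:s_i^{(n)}\to+\infty\}$ (so $k_i\to+\infty$), with $A\cup B\neq\emptyset$. For $A$: if $i\in A$, then in each face $f\ni i$ either some other vertex lies in $B$, in which case that vertex's $T^f\to\pi$ by \eqref{3.4}/\eqref{3.5}, forcing $T_i^f\to 0$ via $\sum_{v\in V_f}T_v^f<\pi$, or all other vertices of $f$ have bounded curvature, in which case $T_i^f\to 0$ directly by \eqref{3.3}; thus $T_i\to 0$, contradicting $T_i^{*}>0$. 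Hence $A=\emptyset$. For $B$: taking $I=B$ and using \eqref{3.4}, \eqref{3.5}, \eqref{3.6} according to whether a face of $F_B$ meets $B$ in one, two, or three vertices (the remaining vertices having finite curvature limits), I get $\sum_{v\in I\cap V_f}T_v^f\to\pi$ for each $f\in F_B$, so $\sum_{v\in B}T_v\to\pi|F_B|$; but $\sum_{v\in B}T_v\to\sum_{v\in B}T_v^{*}<\pi|F_B|$, a contradiction. So $B=\emptyset$ as well, $s^{(n)}$ is bounded, and a convergent subsequence $s^{(n_j)}\to s^{*}$ yields $\nabla\mathcal{E}(s^{*})=T^{*}$, i.e. $T^{*}\in\mathcal{W}$.

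With $\mathcal{W}$ nonempty, open, and closed in the connected set $\mathfrak{T}$, I conclude $\mathcal{W}=\mathfrak{T}$. Thus $\nabla\mathcal{E}:\mathbb{R}^{|V^{\circ}|}\to\mathfrak{T}$ is a continuous bijection that is locally a diffeomorphism, so it is an open map with continuous inverse, hence a homeomorphism. The main obstacle is the properness step: the delicate point is organizing the degeneration of $s^{(n)}$ into the $A$ and $B$ regimes and checking that the joint limits of Lemma \ref{2.12} apply in every face configuration (in particular that a vertex with $k\to 0$ sharing a face with a vertex with $k\to\infty$ still contributes a vanishing $T_i^f$), so that the two defining features of $\mathfrak{T}$ --- strict positivity and the strict face inequalities --- precisely obstruct escape of $s^{(n)}$ to infinity in the two possible directions.
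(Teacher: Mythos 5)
Your proposal is correct and follows essentially the same route as the paper: image contained in $\mathfrak{T}$ via Gauss--Bonnet, injectivity and openness from the positive definite Hessian, and surjectivity via the degeneration analysis of Lemma \ref{2.12} split into coordinates with $k\to 0$ and $k\to+\infty$. Your explicit open--closed--connected packaging (and your careful treatment of the mixed case where a $k\to 0$ vertex shares a face with a $k\to\infty$ vertex) is just a more detailed rendering of the paper's final appeal to invariance of domain together with its claim that boundary sequences map to $\partial\mathfrak{T}$.
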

	
	\begin{proof}
		By the definition of $\mathcal{E}$, we have 
		$$
		\begin{aligned}
			\nabla\mathcal{E}: \mathbb{R}^{|V^{\circ}|} & \longrightarrow \mathbb{R}^{|V^{\circ}|}_{>0}  \\
			s=(s_1,\cdots,s_{|V^{\circ}|}) & \mapsto T=(T_1,\cdots,T_{|V^{\circ}|}).
		\end{aligned}
		$$
		For any subset $I\subset V^{\circ}$, we obtain
		$$
		\sum_{v\in I}T_v=\sum_{v\in I}\sum_{f\in F_{\{v\}}}T_v^f=\sum_{f\in F_I}\sum_{v\in V_f\cap I}T_v^f.
		$$
		By Gauss-Bonnet theorem, we have
		$$
		\text{Area}(\Omega_f)=\pi-\sum_{v\in V_f}T_v^f>0,
		$$
		i.e. $\sum_{v\in V_f}T_v^f<\pi$. Hence we obtain
		$$
		\sum_{v\in I}T_v=\sum_{f\in F_I}\sum_{v\in V_f\cap I}T_v^f\le \sum_{f\in F_I}\sum_{v\in V_f}T_v^f<\sum_{f\in F_I}\pi=\pi|F_I|.
		$$
		Hence $T=(T_1,\cdots,T_{|V^{\circ}|})\in\mathfrak{T}$ and $\nabla\mathcal{E}$ is a map from $\mathbb{R}^{|V^{\circ}|}$ to $\mathfrak{T}$. 
		
		By Proposition \ref{pro3} and Corollary \ref{coro1}, $\nabla\mathcal{E}$ is a embedding map. Hence we only need to show that the image set of $\nabla\mathcal{E}$ is $\mathfrak{T}$. Now we need to analysis the boundary of its image.
		
		Choose a point $s^*=(s_1^*,\cdots,s_{|V^{\circ}|}^*)\in \partial\mathbb{R}^{|V^{\circ}|}$, we define two subset $W_1, W_2\subset V^{\circ}$, i.e.
		$$
		W_1=\{i\in V^{\circ}~|~s_i^*=+\infty\},~W_2=\{i\in V^{\circ}~|~s_i^*=-\infty\},
		$$
		we know that $W_1\ne\emptyset$ or $W_2\ne\emptyset$.
		
		We choose a sequence $\{s^n\}_{n=1}^{+\infty}\subset \mathbb{R}^{|V^{\circ}|}$ such that $\lim_{n\to +\infty}s^n=s^*$, where $s^n=(s_1^n,\cdots,s_{|V^{\circ}|}^n)$. Then we have
		$$
		\nabla\mathcal{E}(s^n)=T(s^n)\overset{\Delta}{=}T^n,~k_i^n\overset{\Delta}{=}\exp(s_i^n),~\forall n\ge 1.
		$$
		Now we need to show that $\{T^n\}_{n=1}^{+\infty}$ converges to the boundary of $\mathfrak{T}$.
		
		If $W_1\ne\emptyset$, we know that for each $i\in W_1$, $s_i^n\to +\infty (n\to +\infty)$ and $k_i^n=\exp(s_i^n)\to +\infty ~(n\to +\infty)$. Besides, note that
		\begin{align}\label{for4}
			\lim_{n\to +\infty}\sum_{v\in W_1}T_v^n=\lim_{n\to +\infty}\sum_{v\in W_1}\sum_{f\in F_{\{v\}}}T_{v}^{f,n}=\lim_{n\to +\infty}\sum_{f\in F_{W_1}}\sum_{v\in V_f\cap W_1}T_{v}^{f,n}.
		\end{align}
		Now we need to show
		\begin{align}\label{for5}
			\lim_{n\to +\infty}\sum_{v\in V_f\cap W_1}T_{v}^{f,n}=\pi.
		\end{align}
		Since $W_1\ne\emptyset$ and $f\in F_{W_1}$, the set $ V_f\cap W_1$ has 1 or 2 or 3 elements. We can suppose that $V_f=\{r,s,t\}$. 
		
		If $V_f\cap W_1$ has 1 elements, we can suppose that $V_f\cap W_1=\{r\}$. Then we know that $k_r^n\to+\infty (n\to+\infty)$ and $k_s^n, k_t^n\nrightarrow+\infty (n\to+\infty)$. Then by (\ref{3.4}) in Lemma \ref{2.12}, we have 
		\begin{equation}\label{3.7}
			\lim_{n\to+\infty}\sum_{v\in V_f\cap W_1}T_v^{f,n}=\lim_{n\to+\infty}T_r^{f,n}=\pi.
		\end{equation}
		
		If $V_f\cap W_1$ has 2 elements, we can suppose that $V_f\cap W_1=\{r,s\}$. Then we know that $k_r^n, k_s^n\to+\infty (n\to+\infty)$ and $k_t^n\nrightarrow+\infty (n\to+\infty)$. Then by (\ref{3.5}) in Lemma \ref{2.12}, we have 
		\begin{equation}\label{3.8}
			\lim_{n\to+\infty}\sum_{v\in V_f\cap W_1}T_v^{f,n}=\lim_{n\to+\infty}T_r^{f,n}+T_s^{f,n}=\pi.
		\end{equation}
		
		If $V_f\cap W_1$ has 3 elements, we can suppose that $V_f\cap W_1=\{r,s,t\}$. Then we know that $k_r^n, k_s^n, k_t^n\to+\infty (n\to+\infty)$. Then by (\ref{3.6}) in Lemma \ref{2.12}, we have 
		\begin{equation}\label{3.9}
			\lim_{n\to+\infty}\sum_{v\in V_f\cap W_1}T_v^{f,n}=\lim_{n\to+\infty}T_r^{f,n}+T_s^{f,n}+T_t^{f,n}=\pi.
		\end{equation}
		Hence if $W_1\ne\emptyset$, by (\ref{3.7}), (\ref{3.8}) and (\ref{3.9}), we have 
		$$
		\lim_{n\to +\infty}\sum_{v\in W_1}T_v^n=\lim_{n\to +\infty}\sum_{v\in W_1}\sum_{f\in F_{\{v\}}}T_{v}^{f,n}=\lim_{n\to +\infty}\sum_{f\in F_{W_1}}\sum_{v\in V_f\cap W_1}T_{v}^{f,n}=\sum_{f\in F_{W_1}}\lim_{n\to +\infty}\sum_{v\in V_f\cap W_1}T_{v}^{f,n}=\pi|F_{W_1}|,
		$$
		i.e. $\{T^n\}_{n=1}^{+\infty}$ converges to the boundary of $\mathfrak{T}$.    
		
		If $W_2\ne\emptyset$, then for each $i\in W_2$, $s_i^n\to-\infty (n\to +\infty)$ and $k_i^n=\exp(s_i^n)\to 0 ~(n\to +\infty)$. By (\ref{3.3}) in Lemma \ref{2.12}, we have 
		$$
		\lim_{n\to+\infty}T_i^n=\lim_{n\to+\infty}\sum_{f\in F_{\{i\}}}T_i^{f,n}=0.
		$$
		i.e. $\{T^n\}_{n=1}^{+\infty}$ converges to the boundary of $\mathfrak{T}$.
		
		By Brouwer's Theorem on the Invariance of Domain and the above analysis, we know that the image set of $\nabla\mathcal{E}$ is $\mathfrak{T}$. Hence $\nabla\mathcal{E}$ is a homeomorphism from $\mathbb{R}^{|V^{\circ}|}$ to $\mathfrak{T}$. 
	\end{proof}
	
	We can construct a map $\varsigma$ from $\mathbb{R}^{|V^{\circ}|}_{>0}$ to $\mathbb{R}^{V^{\circ}|}$, i.e.
	$$
	\begin{array}{cccc}
		\varsigma: \mathbb{R}^{|V^{\circ}|}_{>0} &\longrightarrow &\mathbb{R}^{|V^{\circ}|}\\
		k=(k_1,\cdots,k_{|V^{\circ}|})&\longmapsto & s=(\ln k_1, \cdots, \ln k_{|V^{\circ}|}),\\
	\end{array}
	$$
	the map $\varsigma$ is a homeomorphism. By Theorem \ref{thm1}, we know that the map $\mathcal{E}_1:=\nabla\mathcal{E}\circ\varsigma$, i.e.
	$$
	\begin{array}{cccc}
		\mathcal{E}_1: \mathbb{R}^{|V^{\circ}|}_{>0} &\longrightarrow &\mathfrak{T}\\
		k&\longmapsto & \mathcal{E}_1(k)=\nabla\mathcal{E}(s)=T(s)\\
	\end{array}
	$$
	is a homeomorphism from $\mathbb{R}^{|V^{\circ}|}_{>0}$ to $\mathfrak{T}$. 
	
	For $\hat{k}=(\hat{k}_1,\cdots,\hat{k}_{|V^{\partial}|})\in\mathbb{R}_{>0}^{|V^{\partial}|}$ and $\hat{T}=(\hat{T}_1,\cdots,\hat{T}_{|V^{\circ}|})\in\mathbb{R}^{|V^{\circ}|}_{>0}$, a generalized circle packing $P$ is said to realize the data $(\hat{k},\hat{T})$ if its corresponding geodesic curvature is equal to $\hat{k}$ on boundary vertex set $V^{\partial}$ and corresponding total geodesic curvature is equal to $\hat{T}$ on interior vertex set $V^{\circ}$. 
 
 \begin{proof}[The proof of Theorem \ref{thm}]
From the analysis of this section, we can easily obtain Theorem \ref{thm}.
\end{proof}

% \begin{theorem}\label{thm2}
%		Given a closed topological surface $S_{g,n}$ with a triangulation $\mathcal{T}=(V,E,F)$ as before. For the prescribed geodesic curvatures $\hat{k}\in\mathbb{R}_{>0}^{|V^{\partial}|}$ on boundary vertex set $V^{\partial}$ and prescribed total geodesic curvatures $\hat{T}\in\mathbb{R}^{|V^{\circ}|}_{>0}$ on interior vertex set $V^{\circ}$, there exists a generalized circle packing $P$ on $S_{g,n}$ which realizes $(\hat{k},\hat{T})$ if and only if the prescribed total geodesic curvature $\hat{T}\in\mathfrak{T}$, where 
%		$$
%		\mathfrak{T}=\{(T_1,\cdots,T_{|V^{\circ}|})\in \mathbb{R}^{|V^{\circ}|}_{>0}~|~\sum_{v\in I}T_v<\pi|F_I|, \forall I\subset V^{\circ}\}.
%		$$
%		Moreover, the generalized circle packing $P$ is unique up to isometry if it exists.
%	\end{theorem}
	
	\section{Combinatorial Calabi flows for prescribed total geodesic curvatures }\label{s4}

	Using change of variables $s_i=\ln k_i$, we can rewrite flows (\ref{f1}) as the following two equivalent combinatorial Calabi flows.
	\begin{definition}(combinatorial Calabi flows)
		\begin{equation}\label{f2}
			\frac{ds}{dt}=-M'(T-\hat{T}).
		\end{equation}
	\end{definition}

	\begin{proof}[proof of Theorem \ref{t1}]
		We can only consider the equivalent flows (\ref{f2}). By $i\sim j$ we denote the vertices $i$ and $j$ are adjacent. Then for any vertex $i\in V^{\circ}$, we obtain
		\begin{equation}\label{e7}
			\begin{aligned}
				\frac{ds_i}{dt}&=-\sum_{j=1}^{|V^{\circ}|}\pp{T_j}{s_i}(T_j-\hat{T}_j) \\
				&=-\pp{T_i}{s_i}(T_i-\hat{T}_i)-\sum_{j\sim i}\pp{T_j}{s_i}(T_j-\hat{T}_j).\\
			\end{aligned}
		\end{equation}
		Moreover, we have
		\begin{equation}\label{e1}
			\pp{T_i}{s_i}=\sum_{f\in F_{\{i\}}}\pp{T_i^f}{s_i},~~\pp{T_j}{s_i}=\sum_{f\in F_{\{i,j\}}}\pp{T_j^f}{s_i},~~j\sim i,
		\end{equation}
		where $F_{\{i,j\}}$ is the set of all faces with $i$ and $j$ as its vertices.
		
		By the proof of Lemma 2.8 in \cite{HHSZ}, for any vertices $j\sim i$ and face $f\in F_{\{i,j\}}$, we have
		\begin{equation}
			0<\left|\pp{T_j^f}{k_i}\right|<\frac{2(k_f^2-1)}{k_f(k_f^2-1+k_i^2)}.
		\end{equation}
		Then we obtain
		\begin{equation}\label{e2}
			\left|\pp{T_j^f}{s_i}\right|=k_i\left|\pp{T_j^f}{k_i}\right|<\frac{2k_i(k_f^2-1)}{k_f(k_f^2-1+k_i^2)}=\frac{2\frac{k_i}{k_f}}{1+\frac{k_i^2}{k_f^2-1}}<\frac{2\frac{k_i}{\sqrt{k_f^2-1}}}{1+\frac{k_i^2}{k_f^2-1}}\le 1.
		\end{equation}
		By (\ref{e1}) and(\ref{e2}), we have
		\begin{equation}\label{e8}
			\left|\pp{T_j}{s_i}\right|=\left|\sum_{f\in F_{\{i,j\}}}\pp{T_j^f}{s_i}\right|\le \sum_{f\in F_{\{i,j\}}}\left|\pp{T_j^f}{s_i}\right|\le 2.
		\end{equation}
		
		For any vertex $i\in V^{\circ}$ and $f\in F_{\{i\}}$, by Gauss-Bonnet theorem, we have
		$$
		T_i^f+\sum_{j\in V_f-\{i\}}T_j^f+\text{Area}(\Omega_f)=\pi.
		$$
		Then we obtain 
		\begin{equation}\label{e3}
			\pp{T_i^f}{s_i}=-\pp{\text{Area}(\Omega_f)}{s_i}-\sum_{j\in V_f-\{i\}}\pp{T_j^f}{s_i}.
		\end{equation}
		By Theorem 4.2 in \cite{HHSZ}, we know that $\pp{\text{Area}(\Omega_f)}{s_i}=k_i\pp{\text{Area}(\Omega_f)}{k_i}$ is bounded. Then there exists a constant $C$ such that 
		\begin{equation}\label{e4}
			\left|\pp{\text{Area}(\Omega_f)}{s_i}\right|\le C.
		\end{equation}
		By (\ref{e2}), (\ref{e3}) and (\ref{e4}), we have
		\begin{equation}\label{e5}
			\left|\pp{T_i^f}{s_i}\right|\le\left|\pp{\text{Area}(\Omega_f)}{s_i}\right|+\sum_{j\in V_f-\{i\}}\left|\pp{T_j^f}{s_i}\right|\le 2+C.
		\end{equation}
		By $d_i$ we denote the number of all faces with $i$ as one of its vertices. 
		Then by (\ref{e1}) and (\ref{e5}), we have 
		\begin{equation}\label{e9}
			\left|\pp{T_i}{s_i}\right|\le\sum_{f\in F_{\{i\}}}\left|\pp{T_i^f}{s_i}\right|\le (2+C)d,
		\end{equation}
		where $d=\text{max}_{i\in V^{\circ}}d_i$.
		Moreover, by Gauss-Bonnet theorem, we have
		\begin{equation}\label{e6}
			\left|T_i\right|\le\sum_{f\in F_{\{i\}}}\left|T_i^f\right|\le\pi d,~~\forall i\in V^{\circ}.
		\end{equation}
		Then by (\ref{e7}), (\ref{e8}), (\ref{e9}) and (\ref{e6}), we obtain
		$$
		\left|\frac{ds_i}{dt}\right|\le\left|\pp{T_i}{s_i}\right|\left|T_i-\hat{T}_i\right|+\sum_{j\sim i}\left|\pp{T_j}{s_i}\right|\left|(T_j-\hat{T}_j)\right|\le(4+C)(\pi d+\left|\left|\hat{T}\right|\right|_{\infty})d<+\infty.
		$$
		Hence all $|\frac{ds_i}{dt}|$ are uniformly bounded by a constant. Then by the extension theorem of solutions in ODE theory, the solution of the combinatorial Calabi flows (\ref{f1}) exists for all time $t\ge 0$.
	\end{proof}
	
	Given $\hat{k}=(\hat{k}_1,\cdots,\hat{k}_{|V^{\partial}|})\in\mathbb{R}_{>0}^{|V^{\partial}|}$ and $\hat{T}=(\hat{T}_1,\cdots,\hat{T}_{|V^{\circ}|})\in\mathfrak{T}$, then we can define a 1-form
	$$
	\omega=\sum_{i=1}^{|V^{\circ}|}(T_i-{\hat{T}}_i)\mathrm{~d} s_i,
	$$
	it is easy to see that $\omega$ is closed.
	
	By Theorem \ref{thm1}, there exists $\hat{s}\in \mathbb{R}^{|V^{\circ}|}$ which satisfies $T(\hat{s})=\hat{T}$, then we can define the potential function
	$$
	\Theta(s)=\int_{\hat{s}}^s \omega.
	$$
	The function $\Theta$ is well-defined and does not depend on the particular choice of a piecewise smooth arc in $\mathbb{R}^{|V^{\circ}|}$ from the initial point $\hat{s}$ to $s$.
	
	\begin{proposition}
		The potential function $\Theta$ is strictly convex. 
	\end{proposition}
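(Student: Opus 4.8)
The plan is to observe that $\Theta$ differs from the potential function $\mathcal{E}$ of Section \ref{s2} only by an affine function, so that their Hessians coincide, and then to invoke the positive-definiteness of the Jacobi matrix $M$ already established in Proposition \ref{pro3}.

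First I would compute $\nabla\Theta$. Since $\Theta(s)=\int_{\hat{s}}^s\omega$ with $\omega=\sum_{i=1}^{|V^{\circ}|}(T_i-\hat{T}_i)\,\mathrm{d}s_i$, and since $\omega$ is closed (as noted just before the statement), the fundamental theorem of calculus for line integrals gives
\begin{equation*}
\pp{\Theta}{s_i}=T_i-\hat{T}_i,\qquad 1\le i\le |V^{\circ}|,
\end{equation*}
so that $\nabla\Theta=(T_1-\hat{T}_1,\cdots,T_{|V^{\circ}|}-\hat{T}_{|V^{\circ}|})$. Equivalently, one may note directly that $\Theta(s)=\mathcal{E}(s)-\sum_{i=1}^{|V^{\circ}|}\hat{T}_i s_i+c$ for a constant $c$, since $\nabla\mathcal{E}=(T_1,\cdots,T_{|V^{\circ}|})$ and the $\hat{T}_i$ are fixed constants.

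Next I would differentiate once more. Because each $\hat{T}_i$ is constant, differentiating $\pp{\Theta}{s_i}=T_i-\hat{T}_i$ in $s_j$ yields
\begin{equation*}
\text{Hess}~\Theta=\left(\pp{(T_i-\hat{T}_i)}{s_j}\right)_{|V^{\circ}|\times|V^{\circ}|}=\left(\pp{T_i}{s_j}\right)_{|V^{\circ}|\times|V^{\circ}|}=M=\text{Hess}~\mathcal{E},
\end{equation*}
the Jacobi matrix of (\ref{ja}); the affine correction term contributes nothing to the second derivatives. By Proposition \ref{pro3} the matrix $M$ is positive definite on all of $\mathbb{R}^{|V^{\circ}|}$, hence $\text{Hess}~\Theta$ is positive definite, and therefore $\Theta$ is strictly convex.

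I do not expect any genuine obstacle here: the entire content is already packaged in Proposition \ref{pro3}, and the only point to verify is that subtracting the linear term $\sum_i\hat{T}_i s_i$ leaves the Hessian unchanged. The lone bookkeeping subtlety is that $\mathcal{E}$ in Section \ref{s2} is defined with base point $0$ while $\Theta$ uses base point $\hat{s}$; this merely shifts $\Theta$ by an additive constant and plays no role in convexity, which I would remark on in one sentence rather than belabor.
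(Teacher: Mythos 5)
Your proposal is correct and follows exactly the paper's own argument: compute $\nabla\Theta=T-\hat{T}$, observe that $\operatorname{Hess}\Theta=M$, and invoke Proposition \ref{pro3} for positive definiteness. The extra remarks about the affine correction and the choice of base point are harmless elaborations of the same proof.
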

	\begin{proof}
		By the definition of $\Theta$, we have 
		$$
		\nabla\Theta=T-\hat{T},~~\text{Hess}~\Theta=M.
		$$
		Then by Proposition \ref{pro3}, $\text{Hess}~\Theta$ is positive definite and $\Theta$ is strictly convex.
	\end{proof}
	
	\begin{proposition}\label{p5}
		$\hat{s}$ is the unique critical point of the potential function $\Theta$.
	\end{proposition}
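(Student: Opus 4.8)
The plan is to verify that $\hat{s}$ is a critical point by a direct gradient computation, and then to deduce uniqueness from the strict convexity of $\Theta$ established in the preceding proposition (equivalently, from the injectivity of $\nabla\mathcal{E}$ furnished by Theorem \ref{thm1}).

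First I would recall that, by the definition of $\Theta$ together with the closedness of $\omega$, one has $\nabla\Theta(s) = T(s) - \hat{T}$ for every $s \in \mathbb{R}^{|V^{\circ}|}$. Consequently, a point $s$ is a critical point of $\Theta$ precisely when $T(s) = \hat{T}$. Since $\hat{s}$ was chosen, via Theorem \ref{thm1}, so as to satisfy $T(\hat{s}) = \hat{T}$, it follows at once that $\nabla\Theta(\hat{s}) = 0$, and hence $\hat{s}$ is indeed a critical point of $\Theta$.

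For uniqueness I would argue in either of two equivalent ways. On one hand, the preceding proposition shows that $\text{Hess}~\Theta = M$ is positive definite, so $\Theta$ is strictly convex on the convex domain $\mathbb{R}^{|V^{\circ}|}$; a strictly convex function can possess at most one critical point (which is then automatically its unique global minimum). On the other hand, one may invoke Theorem \ref{thm1} directly: the map $T = \nabla\mathcal{E}$ is a homeomorphism from $\mathbb{R}^{|V^{\circ}|}$ onto $\mathfrak{T}$, and in particular injective, so the equation $T(s) = \hat{T}$ admits at most one solution. Combining the existence of $\hat{s}$ with either uniqueness argument yields that $\hat{s}$ is the unique critical point.

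Since every ingredient needed here—the gradient formula $\nabla\Theta = T - \hat{T}$, the positive definiteness of $M$, and the injectivity of $\nabla\mathcal{E}$—is already available from the earlier results, I do not expect any genuine obstacle. The only point requiring a little care is the purely formal observation that critical points of $\Theta$ correspond exactly to solutions of $T(s) = \hat{T}$, which is what makes the reduction to Theorem \ref{thm1} (or to strict convexity) immediate.
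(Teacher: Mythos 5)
Your proposal is correct and follows essentially the same route as the paper: identify critical points of $\Theta$ with solutions of $T(s)=\hat{T}$, use $T(\hat{s})=\hat{T}$ for existence, and use strict convexity (equivalently, injectivity of $\nabla\Theta$) for uniqueness. No gaps.
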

	\begin{proof}
		Since $\Theta$ is strictly convex and $\text{Hess}~\Theta$ is positive definite, then $\nabla\Theta$ is a embedding map. Moreover, we have $T(\hat{s})=\hat{T}$, i.e. 
		$$\nabla\Theta(\hat{s})=T(\hat{s})-\hat{T}=0,$$ 
		hence $\hat{s}$ is a critical point of $\Theta$. Since $\nabla\Theta$ is a embedding map, then $\hat{s}$ is the unique critical point of $\Theta$.
	\end{proof}
	
	\begin{proposition}\label{p6}
		Suppose $s(t)$ is a long time solution to the combinatorial Calabi flow \ref{f2}, then $s(t)$ converges to $\hat{s}$, i.e.
		$$
		\operatorname{lim}_{t\rightarrow +\infty} s(t)=\hat{s}.
		$$   
	\end{proposition}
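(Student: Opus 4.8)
The plan is to use the strictly convex potential $\Theta$ from Proposition \ref{p5} as a Lyapunov function for the autonomous flow (\ref{f2}) and to upgrade its monotonicity to convergence by a compactness argument. First I would differentiate $\Theta$ along a solution $s(t)$. Since $\nabla\Theta=T-\hat{T}$, the matrix $\text{Hess}~\Theta=M$ is symmetric and positive definite (Proposition \ref{pro3}, so $M'=M$), and $\dot{s}=-M'(T-\hat{T})$, one obtains
\[
\frac{d}{dt}\Theta(s(t))=\langle\nabla\Theta,\dot{s}\rangle=-(T-\hat{T})'M(T-\hat{T})\le 0,
\]
with equality if and only if $T=\hat{T}$, i.e. $s=\hat{s}$. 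Thus $\Theta(s(t))$ is nonincreasing and bounded below by $\Theta(\hat{s})$; moreover, because $M$ is invertible, the equilibria of (\ref{f2}) are exactly the zeros of $\nabla\Theta$, which reduce to the single point $\hat{s}$.

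The main obstacle is to confine the orbit $\{s(t):t\ge 0\}$ to a compact set, and this is exactly where $\hat{T}$ being an \emph{interior} point of $\mathfrak{T}$ enters. By Theorem \ref{thm1} the gradient $\nabla\Theta=\nabla\mathcal{E}(\cdot)-\hat{T}$ is a homeomorphism of $\mathbb{R}^{|V^{\circ}|}$ onto $\mathfrak{T}-\hat{T}$, an open set containing the origin in its interior (since $\mathfrak{T}$ is open and $\hat{T}\in\mathfrak{T}$). I would then show that a differentiable convex function whose gradient image contains a ball $B_{\epsilon}(0)$ is coercive: for each unit vector $u$ pick $s_u$ with $\nabla\Theta(s_u)=\epsilon u$, and the subgradient inequality gives $\Theta(s)\ge\Theta(s_u)+\epsilon\langle u,s\rangle-\epsilon\langle u,s_u\rangle$. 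Taking $u=s/|s|$ and using that $u\mapsto s_u=(\nabla\Theta)^{-1}(\epsilon u)$ is bounded on the compact unit sphere yields $\Theta(s)\ge\Theta(\hat{s})+\epsilon|s|-C$ for large $|s|$, so $\Theta$ is proper. Consequently the sublevel set $\{\Theta\le\Theta(s(0))\}$ is compact, and since $\Theta$ is nonincreasing along the flow the entire orbit lies in this compact set $K$.

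Finally I would conclude by a LaSalle-type invariance argument. As the orbit is precompact, its $\omega$-limit set is nonempty, compact and invariant under (\ref{f2}); since $\Theta$ decreases to a limit $\Theta_{\infty}$ it is constant, equal to $\Theta_{\infty}$, on the $\omega$-limit set, so along any orbit contained in it $\tfrac{d}{dt}\Theta\equiv 0$, forcing $T=\hat{T}$ and hence $s=\hat{s}$ there. Therefore the $\omega$-limit set is precisely $\{\hat{s}\}$, which together with precompactness gives $\lim_{t\to+\infty}s(t)=\hat{s}$. Equivalently, on $K$ the matrix $M$ is uniformly positive definite, so the bound $\int_0^{\infty}(T-\hat{T})'M(T-\hat{T})\,dt<\infty$ yields $\int_0^{\infty}|T-\hat{T}|^2\,dt<\infty$; since $\tfrac{d}{dt}|T-\hat{T}|^2$ is bounded on $K$, Barbalat's lemma gives $T(s(t))\to\hat{T}$, and the homeomorphism of Theorem \ref{thm1} then forces $s(t)\to\hat{s}$.
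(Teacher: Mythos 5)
Your proposal is correct and follows the same overall strategy as the paper: use $\Theta$ as a Lyapunov function, note $\frac{d}{dt}\Theta(s(t))=-(T-\hat{T})'M(T-\hat{T})\le 0$, confine the orbit to a compact sublevel set via properness of $\Theta$, and identify the limit with the unique critical point $\hat{s}$. The differences lie in how two sub-steps are executed. For properness, the paper simply asserts that a strictly convex function with a critical point is proper, whereas you derive coercivity from the fact that $\nabla\Theta$ maps onto a neighborhood of the origin (since $\hat{T}$ is interior to $\mathfrak{T}$) together with the subgradient inequality; your argument is more explicit and would survive even without strict convexity, though the paper's claim is also valid (restricting to rays from $\hat{s}$ and using compactness of the unit sphere). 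For the endgame, the paper avoids citing LaSalle or Barbalat: it applies the mean value theorem on the intervals $[n,n+1]$ to produce times $\xi_n$ with $\frac{d\Phi}{dt}\big|_{t=\xi_n}\to 0$, extracts a convergent subsequence $s(\xi_n)\to s^*$, uses positive definiteness of $M(s^*)$ to conclude $T(s^*)=\hat{T}$ and hence $s^*=\hat{s}$, deduces $\Phi(t)\to 0$, and finishes by contradiction using that $\Theta$ is bounded below by a positive constant on the compact set $\Theta^{-1}[0,\Theta(s(0))]\setminus B(\hat{s},\delta)$. Your LaSalle/Barbalat route packages exactly this reasoning into standard dynamical-systems lemmas, which is cleaner if one is willing to cite them; the paper's version is self-contained and elementary. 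Both are complete proofs.
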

	\begin{proof}
		Since the potential function $\Theta$ is strictly convex and $\hat{s}$ is the unique critical point of $\Theta$, then we have
		$$
		\Theta(s)\ge \Theta(\hat{s})=0, \forall s\in \mathbb{R}^{|V^{\circ}|}.
		$$
		We can define a function $\Phi(t)=\Theta(s(t))$, since $M$ is positive definite, then we obtain
		$$
		\begin{aligned}
			\frac{d\Phi}{dt}=\nabla\Theta(s(t))^{\prime} \cdot \frac{ds}{dt}=-(T(s(t))-\hat{T})^{\prime}M'(T(s(t))-\hat{T})\le 0,
		\end{aligned}
		$$
		then $\Phi(t)$ is decreasing on [0,+$\infty$). Since $\Phi(t)=\Theta(s(t))\ge 0$ and $\Phi(t)$ is a decreasing function, then $\Phi(t)$ converges, i.e. $\operatorname{lim}_{t\rightarrow +\infty} \Phi(t)$ exists and 
		\begin{equation}\label{e10}
			0\le \Phi(t)=\Theta(s(t))\le \Phi(0)=\Theta(s(0)),~~\forall t\in [0,+\infty).
		\end{equation}
		Since $\Theta$ is strictly convex and has a critical point $\hat{s}$, then $\Theta$ is a proper map. By (\ref{e10}) and $\Theta$ is a proper map, we have $\{s(t)\}_{t\ge 0}\subset\Theta^{-1}[0,\Theta(s(0))]$ and $\Theta^{-1}[0,\Theta(s(0))]$ is compact, hence $\{s(t)\}_{t\ge 0}$ is compactly supported in $\mathbb{R}^{|V^{\circ}|}$.
		
		Using the mean value theorem for the function $\Phi(t)$, we have 
		\begin{align}
			\Phi(n+1)-\Phi(n)=\left.\frac{d\Phi}{dt}\right |_{t=\xi_n}, ~~\forall n\in \mathbb{N},\label{mean value}
		\end{align}
		for some $\xi_n\in[n,n+1]$. Since $\operatorname{lim}_{t\rightarrow +\infty} \Phi(t)$ exists, then we have 
		\begin{equation}\label{e11}
			0=\operatorname{lim}_{n\rightarrow +\infty} \Phi(n+1)-\operatorname{lim}_{n\rightarrow +\infty}\Phi(n)=\operatorname{lim}_{n\rightarrow +\infty} \left.\frac{d\Phi}{dt}\right |_{t=\xi_n}.
		\end{equation}
		Since $\{s(t)\}_{t\ge 0}$ is compactly supported in $\mathbb{R}^{|V^{\circ}|}$, then $\{s(\xi_n)\}_{n\ge 0}$ has a convergent subsequence, we still use $\{s(\xi_n)\}_{n\ge 0}$ to denote this convergent subsequence for simplicity. Then there exists a point $s^*\in \mathbb{R}^{|V^{\circ}|}$ such that
		$$
		\operatorname{lim}_{n\rightarrow +\infty} s(\xi_n)=s^*.
		$$
		Then by (\ref{e11}), we have
		$$
		\begin{aligned}
			\operatorname{lim}_{n\rightarrow +\infty}\left.\frac{d\Phi}{dt}\right |_{t=\xi_n}&=
			\operatorname{lim}_{n\rightarrow +\infty}-(T(s(\xi_n))-\hat{T})^{\prime}M'(s(\xi_n))(T(s(\xi_n))-\hat{T})\\
			&=-(T(s^*)-\hat{T})^{\prime}M'(s^*)(T(s^*)-\hat{T})\\
			&=0.
		\end{aligned}
		$$
		Since $M'(s^*)$ is positive definite, then $T(s^*)-\hat{T}=0$. Hence we have
		$$
		\nabla\Theta(s^*)=T(s^*)-\hat{T}=0,
		$$
		which implies that $s^*$ is a critical point of $\Theta$. By Proposition \ref{p5}, $\hat{s}$ is the unique critical point of the potential function $\Theta$, then we have $s^*=\hat{s}$ and 
		$$
		\operatorname{lim}_{n\rightarrow +\infty}s(\xi_n)=\hat{s}.
		$$
		Then we obtain
		$$
		\operatorname{lim}_{n\rightarrow +\infty}\Phi(\xi_n)=\operatorname{lim}_{n\rightarrow +\infty}\Theta(s(\xi_n))=\Theta(\hat{s})=0,$$ 
		which implies 
		\begin{equation}\label{e12}
			\operatorname{lim}_{t\rightarrow +\infty} \Phi(t)=0.
		\end{equation}
		If $s(t)$ does not converge to $\hat{s}$, then there exists $\delta>0$ and a sequence $\{t_n\}_{n\ge 1}$ such that $\operatorname{lim}_{n\rightarrow +\infty} t_n=+\infty$ and 
		$$
		|s(t_n)-\hat{s}|>\delta,~~\forall n\ge 1,
		$$
		then we have $\{s(t_n)\}_{n\ge 1}\subset\mathbb{R}^{|V^{\circ}|}\setminus B(\hat{s},\delta)$. Since $\{s(t_n)\}_{n\ge 1}\subset\Theta^{-1}[0,\Theta(s(0))]\cap(\mathbb{R}^{|V^{\circ}|}\setminus B(\hat{s},\delta))$ and there exists a positive constant $C$ such that 
		$$
		\Theta(s)\ge C>0,~~\forall s\in \Theta^{-1}[0,\Theta(s(0))]\cap(\mathbb{R}^{|V^{\circ}|}\setminus B(\hat{s},\delta)).
		$$
		Then we have
		$$
		\operatorname{lim}_{n\rightarrow +\infty} \Phi(t_n)=\operatorname{lim}_{n\rightarrow +\infty}\Theta(s(t_n))\ge C>0,
		$$
		which contradicts (\ref{e12}). This completes the proof.
	\end{proof}
	
	%\begin{theorem}
	%	Given a closed topological surface $S_{g,n}$ with a triangulation $\mathcal{T}=(V,E,F)$ as before, $\hat{k}\in\mathbb{R}_{>0}^{|V^{\partial}|}$ and $\hat{T}=(\hat{T}_1,\cdots,\hat{T}_{|V^{\circ}|})\in\mathfrak{T}$. For any initial geodesic curvature $k(0)\in\mathbb{R}_{>0}^{|V^{\circ}|}$, the solution to the combinatorial Calabi flow (\ref{f1}) converges to the generalized circle packing packing on $S_{g,n}$ which realizes $(\hat{k},\hat{T})$.
%	\end{theorem}
	\begin{proof}[The proof of Theorem \ref{Calabi flow}]
		We can only consider the equivalent flows (\ref{f2}). By Theorem \ref{t1}, there exists the unique long time solution $s(t)$ to the combinatorial Calabi flow (\ref{f2}). Since $\hat{T}=(\hat{T}_1,\cdots,\hat{T}_{|V^{\circ}|})\in\mathfrak{T}$, then by Theorem \ref{thm1}, there exists a point $\hat{s}\in\mathbb{R}^{|V^{\circ}|}$ such that 
		$$
		\nabla\mathcal{E}(\hat{s})=T(\hat{s})=\hat{T}.
		$$
		By Proposition \ref{p6}, we have 
		$$
		\operatorname{lim}_{t\rightarrow +\infty} s(t)=\hat{s},
		$$
		which implies the solution to the combinatorial Calabi flow (\ref{f1}) converges to the generalized circle packing packing on $S_{g,n}$ which realizes $(\hat{k},\hat{T})$.     
	\end{proof}

	\section{Discrete Schwartz-Pick Lemma for generalized circle packing in hyperbolic background geometry}\label{sd}
	
	Given two sets of prescribed geodesic curvatures $\hat{k},\hat{k}^*\in\mathbb{R}_{>0}^{|V^{\partial}|}$ on boundary vertex set $V^{\partial}$ and prescribed total geodesic curvatures $\hat{T}\in\mathfrak{T}$ on interior vertex set $V^{\circ}$, then by Theorem \ref{thm}, there exists generalized circle packings $P,P^*$ on $S_{g,n}$ which realize $(\hat{k},\hat{T})$, $(\hat{k}^*,\hat{T})$, respectively. 
	
	By $k(P(v)), k(P^*(v))$ we denote the geodesic curvature on the vertex $v$ in the generalized circle packings $P,P^*$, respectively. Then we have the following maximum principle for generalized circle packing in hyperbolic background geometry.
	%\begin{theorem}\label{m1}
	%	(Maximum Principle in Hyperbolic Background Geometry) Let $\hat{k},\hat{k}^*,\hat{T},P$ and $P^*$ be as above. Then:\\
	%	(a) The maximum of $k(P^*(v))/k(P(v))$, if $>1$, is never attained at an interior vertex; and\\
	%	(b) In particular, if the inequality $k(P^*(v))\le k(P(v))$ is true for each boundary vertex, then it holds for all vertices.
%	\end{theorem}
	\begin{proof}[The proof of Theorem \ref{thm5}]
		For simplicity, let $k(P(v))=k_v$ and $k(P^*(v))=k_v^*$. We prove (a) of Theorem \ref{thm5}. If not, there exists a interior vertex \( v_0 \) such that
		
		\begin{equation}\label{4.1}
			\frac{k_{v_0}^*}{k_{v_0}} = \max_{v \in V}\left\{\frac{k_v^*}{k_v}\right\} > 1,~~
			\frac{k^*_{v_0}}{k_{v_0}} \geq \frac{k^*_v}{k_v}, ~\forall v \in V.
		\end{equation}
		\begin{figure}[htbp]
			\centering
			\includegraphics[scale=0.6]{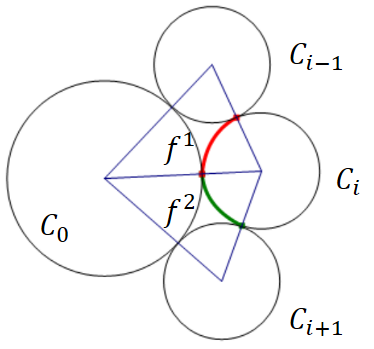}
			%\caption{B}
			\captionof{figure}{\small The faces $f^1$ and $f^2$ with common edge $v_0v_i$. }
			\label{8}
		\end{figure}
		For simplicity, let \( V_{v_0} = \{v_1, v_2, \dots, v_n\} \) denote the set of vertices connected to \( v_0 \) with the corresponding circles \( C_0, C_1, \dots, C_n \) in the generalized circle packing, arranged clockwise such that \( C_{n+1} = C_1 \). 
		
		We can define $k^{(*)}_i := k^{(*)}_{v_i}$, $s^{(*)}_i := \ln k^{(*)}_i$ and \(T_i := T_{v_i} \), \( i = 0, 1, 2, \dots, n \). By (\ref{4.1}), we have
		$$
		\ln\frac{k_0^*}{k_0}>0,~~\ln\frac{k^*_0}{k_0}\geq\ln\frac{k^*_i}{k_i},~i=1,\cdots,n,
		$$
		which implies
		\begin{equation}\label{4.7}
			s^*_0-s_0>0,~~s^*_0-s_0\geq s^*_i-s_i,~i=1,\cdots,n.
		\end{equation}
		
		For $t \in [0,1]$, we can define functions $s_i(t):= (1-t)s_i +ts^*_i$, \( i=0, 1, 2, \dots, n \). Then we have  
		$$
		s_i(0)=s_i,~~s_i(1)=s_i^*,~i=0, 1, 2, \dots, n. 
		$$
		
		We can define functions $s(t):= (s_0(t), s_1(t), \cdots, s_n(t))$, $k(t):= (e^{s_0(t)}, e^{s_1(t)}, \cdots, e^{s_n(t)})$ and $T_0(t):=T_0(s(t))$. By the assumption of Theorem \ref{thm5}, we have $T_0(0)=T_0(1)= \hat T_0$. Then by mean value theorem, there exist a point $\xi \in [0,1]$ such that 
		\begin{equation}\label{4.9}
			0= T_0(1)-T_0(0)=T'_0(\xi)= \sum_{i=0}^{n}\left.\frac{\partial T_0}{\partial s_i}\right|_{t=\xi} \cdot \frac{d s_i}{d t} = \sum_{i=0}^{n}\left.\frac{\partial T_0}{\partial s_i}\right|_{t=\xi}(s_i^*-s_i).
		\end{equation}
		
		For the total geodesic curvature $T_0$ on the vertex $v_0$, we have $T_0= \sum_{f\in F_0} T_0^f$, where $F_0$ is the set of all faces with $v_0$ as one of its vertices. Choose a vertex $v_i\in V_{v_0}$, we can assume $f^1$, $f^2$ are two faces which are adjacent with edge $v_0v_i$ as shown in Figure \ref{8}.

		Then for each face $f\in F_0\setminus\{f^1, f^2\}$, we have $\pp{T_0^f}{s_i}=0$. Hence we obtain
		\begin{equation}\label{4.3}
			\frac{\partial T_0}{\partial s_i}=\sum_{f\in F_0} \pp{T_0^f}{s_i}=\frac{\partial T^{f^1}_0}{\partial s_i}+ \frac{\partial T^{f^2}_0}{\partial s_i}=\frac{\partial T^{f^1}_i}{\partial s_0}+\frac{\partial T^{f^2}_i}{\partial s_0},
		\end{equation}
		where the $T^{f^1}_i$ and $T^{f^2}_i$ are the total geodesic curvatures of the red and green arcs, respectively, as shown in Figure \ref{8}. Then we obtain 
		\begin{equation}\label{4.2}
			\frac{\partial T_0}{\partial s_i}=\sum_{f\in F_0} \pp{T_0^f}{s_i}=\sum_{f\in F_{0,i}}\pp{T_i^f}{s_0},~i=1,\cdots,n, 
		\end{equation}
		where $F_{0,i}$ is the set of all faces with edge $v_0v_i$. By (\ref{4.2}), we have 
		\begin{equation}\label{4.5}
			\sum_{i=0}^n \frac{\partial T_0}{\partial s_i}=\frac{\partial T_0}{\partial s_0}+\sum_{i=1}^n \frac{\partial T_0}{\partial s_i}=\sum_{f \in F_0} \frac{\partial T_0^f}{\partial s_0}+\sum_{i=1}^n \sum_{f \in F_{0, i}} \frac{\partial T_i^f}{\partial s_0}=\sum_{f \in F_0} \sum_{j \in V_f} \frac{\partial T_j^f}{\partial s_0}
		\end{equation}
		where $T_j^f$ are the total geodesic curvatures of the blue arcs as shown in Figure \ref{10}.
		
		\begin{figure}[htbp]
			\centering
			\includegraphics[scale=0.4]{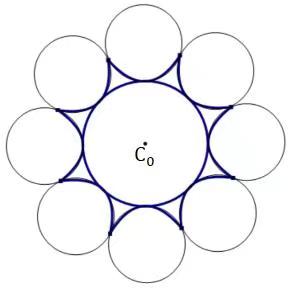}
			%\caption{B}
			\captionof{figure}{\small $T_j^f$'s are the total geodesic curvatures of the blue arcs.  }
			\label{10}
		\end{figure}

		By Lemma 2.11 in \cite{BHS}, we obtain 
		\begin{equation}\label{4.4}
			\sum_{j\in V_f}\pp{T_j^f}{s_0}>0,~\forall f\in F_0,~~\pp{T_i^f}{s_0}<0,~\forall f\in F_{0,i}.
		\end{equation}
		By (\ref{4.2}), (\ref{4.5}) and (\ref{4.4}), we have
		$$
		\sum_{i=0}^n\frac{\partial T_0}{\partial s_i}=\sum_{f\in F_0}\sum_{j\in V_f}\pp{T_j^f}{s_0}>0,~~\frac{\partial T_0}{\partial s_i}=\sum_{f\in F_{0,i}}\pp{T_i^f}{s_0}<0,~i=1,\cdots,n.
		$$
		Then we obtain
		\begin{equation}\label{4.6}
			\sum_{i=0}^n\left.\frac{\partial T_0}{\partial s_i}\right |_{t=\xi}>0,~~\left.\frac{\partial T_0}{\partial s_i}\right |_{t=\xi}<0,~i=1,\cdots,n.
		\end{equation}
		
		For each $1\leq i\leq n$, if $s_i^*-s_i \geq 0$, then by (\ref{4.7}), we have $s_0^*-s_0 \geq s_i^*-s_i \geq 0$. By (\ref{4.6}), we obtain
		$$
		\left.\frac{\partial T_0}{\partial s_i}\right |_{t=\xi}(s_0^*-s_0)\leq \left.\frac{\partial T_0}{\partial s_i}\right |_{t=\xi}(s_i^*-s_i).
		$$
		If $s_i^*-s_i<0$, then by (\ref{4.7}), we have $s_0^*-s_0>0>s_i^*-s_i$. By (\ref{4.6}), we obtain 
		$$
		\left.\frac{\partial T_0}{\partial s_i}\right |_{t=\xi}(s_0^*-s_0)<0<\left.\frac{\partial T_0}{\partial s_i}\right |_{t=\xi}(s_i^*-s_i).
		$$
		Then we have 
		\begin{equation}\label{4.8}
			\left.\frac{\partial T_0}{\partial s_i}\right |_{t=\xi}(s_0^*-s_0)\leq \left.\frac{\partial T_0}{\partial s_i}\right |_{t=\xi}(s_i^*-s_i),~i=1,\cdots,n.  
		\end{equation}
		By (\ref{4.8}), we obtain
		\begin{equation}\label{4.10}
			\begin{aligned}
				\sum_{i=0}^n\left.\frac{\partial T_0}{\partial s_i}\right |_{t=\xi}(s_i^*-s_i)&=\left.\frac{\partial T_0}{\partial s_0}\right |_{t=\xi}(s_0^*-s_0)+\sum_{i=1}^n\left.\frac{\partial T_0}{\partial s_i}\right |_{t=\xi}(s_i^*-s_i) \\
				&\geq \left.\frac{\partial T_0}{\partial s_0}\right |_{t=\xi}(s_0^*-s_0)+\sum_{i=1}^n\left.\frac{\partial T_0}{\partial s_i}\right |_{t=\xi}(s_0^*-s_0)\\
				&=\left (\sum_{i=0}^n\left.\frac{\partial T_0}{\partial s_i}\right |_{t=\xi}\right )(s_0^*-s_0).
			\end{aligned}
		\end{equation}
		By (\ref{4.7}), (\ref{4.6}) and (\ref{4.10}), we have
		$$
		\sum_{i=0}^n\left.\frac{\partial T_0}{\partial s_i}\right |_{t=\xi}(s_i^*-s_i)\geq \left (\sum_{i=0}^n\left.\frac{\partial T_0}{\partial s_i}\right |_{t=\xi}\right )(s_0^*-s_0)>0,
		$$
		which contradicts (\ref{4.9}). This completes the proof of $(a)$. 
		
		We prove (b) of Theorem \ref{thm5}. If not, there exists a interior vertex $v_0$ such that $\frac{k^*_{v_0}}{k_{v_0}} > 1$. Then we have
		$$
		\max_{v \in V}\left\{\frac{k^*_v}{k_v}\right\} \geq \frac{k^*_{v_0}}{k_{v_0}}> 1.
		$$
		By (a) of Theorem \ref{thm5}, there exists a boundary vertex $v'_0$ such that 
		\[
		\frac{k^*_{v'_0}}{k_{v'_0}} = \max_{v \in V}\left\{\frac{k^*_v}{k_v}\right\} > 1,
		\]
		which contradicts assumption of $(b)$. This completes the proof of $(b)$.
	\end{proof}
	
	By $\operatorname{Area}(\Omega_f), \operatorname{Area}(\Omega^*_f)$ we denote the region enclosed by sub-arcs in generalized circle packings $P, P^*$ on the face $f$, respectively. By $C_v, C_v^*$ we denote the generalized circles on the vertex $v$ in generalized circle packings $P, P^*$, respectively. By $l_v^f, l_v^{f*}$ we denote the lengths of sub-arcs $C_v^f, C_v^{f*}$ of generalized circles $C_v, C_v^*$ in dual circles $C_f, C_f^*$, respectively, as shown in Figure \ref{9}. Then we have the following discrete Schwartz-Pick Lemma for generalized circle packing in hyperbolic background geometry.
	
	\begin{figure}[htbp]
		\centering
		\includegraphics[scale=0.25]{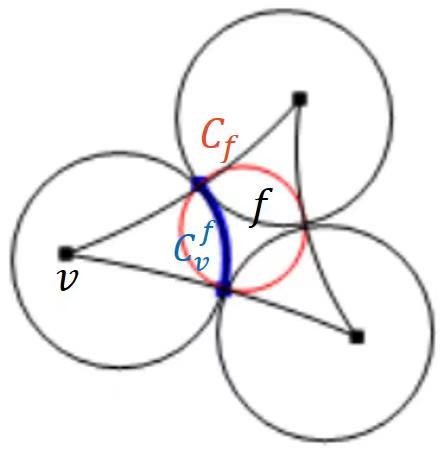}
		%\caption{B}
		\captionof{figure}{\small The sub-arc $C_v^f$ of generalized circle $C_v$.}
		\label{9}
	\end{figure}

	%\begin{theorem}(Discrete Schwartz-Pick Lemma)
	%	Let $P,P^*\operatorname{Area}(\Omega_f),\operatorname{Area}(\Omega^*_f),l_v^f$ and $l_v^{f*}$ be as above. If the inequality $k(P^*(v))\le k(P(v))$ is true for each boundary vertex, then:\\
	%	(a) $\operatorname{Area}(\Omega^*_f)\ge\operatorname{Area}(\Omega_f)$ for every face $f$; \\
	%	(b) $l_v^{f*}\ge l_v^f$ for every face $f$ and every vertex $v$ of $f$;\\
	%	(c) $d_{ij}^*\ge d_{ij}$ for any two tangent points $i,j\in\hat{V}^*$ and corresponding tangent points $i^*,j^*\in\hat{V}^*$.
	%\end{theorem}
	\begin{proof}[The proof of Theorem \ref{thm6}]
		We prove the first part of Theorem \ref{thm6}. Since $k(P^*(v))\le k(P(v))$ is true for each boundary vertex, by $(b)$ in Theorem \ref{thm5}, we have $k_v^*\le k_v$ for each vertex $v$. By Lemma 2.11 in \cite{BHS}, we have
		$$
		\frac{\partial \operatorname{Area}(\Omega_f)}{\partial k_v}<0,~\forall v\in V_f.
		$$
		Then we obtain $\operatorname{Area}(\Omega^*_f)\ge\operatorname{Area}(\Omega_f)$ for every face $f$. This completes the proof of $(a)$ of Theorem \ref{thm6}.
		
		We prove the second part of Theorem \ref{thm6}. The length $l_v^f$ is a function with respect to $k_f$ and $k_v$, where $k_f$ is the geodesic curvature of $C_f$. By Lemma 2.7 in \cite{BHS}, we have $\frac{\partial l_v^f}{\partial k_f}= \frac{2}{1-k_v^2-k_f^2}<0$ and 
		$$
		\frac{\partial l_v^f}{\partial k_v}= \begin{cases}-\frac{2k_v}{(1-k_v^2)^{\frac{3}{2}}}\left (\frac{k_f \sqrt{1-k_v^2}}{k_v^2+k_f^2-1}-\operatorname{arctanh}\frac{\sqrt{1-k_v^2}}{k_f} \right ), & \text { if } 0<k_v<1, \\-\frac{4}{3}\frac{1}{k_f^3}, & \text { if } k_v=1, \\ \frac{2k_v}{(k_v^2-1)^{\frac{3}{2}}}\left (\frac{k_f \sqrt{k_v^2-1}}{k_v^2+k_f^2-1}-\arctan\frac{\sqrt{k_v^2-1}}{k_f} \right ), & \text { if } k_v>1,\end{cases}
		$$
		where $\frac{\partial l_v^f}{\partial k_v}$ is a continuous function.
		
		We can define a function 
		$$
		f(x):= \frac{x}{1-x^2}-\operatorname{arctanh}x,
		$$
		which is a strictly increasing function on $[0,1)$. Then if $0<k_v<1$, we have
		$$
		\frac{k_f \sqrt{1-k_v^2}}{k_v^2+k_f^2-1}-\operatorname{arctanh}\frac{\sqrt{1-k_v^2}}{k_f}=f\left (\frac{\sqrt{1-k_v^2}}{k_f}\right )>f(0)=0,
		$$
		which implies 
		$$
		\frac{\partial l_v^f}{\partial k_v}=-\frac{2k_v}{(1-k_v^2)^{\frac{3}{2}}}\left (\frac{k_f \sqrt{1-k_v^2}}{k_v^2+k_f^2-1}-\operatorname{arctanh}\frac{\sqrt{1-k_v^2}}{k_f} \right )<0.
		$$
		
		We can define a function 
		$$
		g(x):= \frac{x}{1+x^2}-\arctan x,
		$$
		which is a strictly decreasing function on $(0,+\infty)$. Then if $k_v>1$, we have 
		$$
		\frac{k_f \sqrt{k_v^2-1}}{k_v^2+k_f^2-1}-\arctan\frac{\sqrt{k_v^2-1}}{k_f} = g\left (\frac{\sqrt{k_v^2-1}}{k_f}\right )<g(0)=0,
		$$
		which implies
		$$
		\frac{\partial l_v^f}{\partial k_v}=\frac{2k_v}{(k_v^2-1)^{\frac{3}{2}}}\left (\frac{k_f \sqrt{k_v^2-1}}{k_v^2+k_f^2-1}-\arctan\frac{\sqrt{k_v^2-1}}{k_f} \right )<0.
		$$
		Hence we have 
		\begin{equation}\label{4.12}
			\frac{\partial l_v^f}{\partial k_v}<0,~ \frac{\partial l_v^f}{\partial k_f}<0.
		\end{equation}
		Let $v_i$, $v_j$, $v_l$ be the vertices of face $f$, then by Corollary 4.2 in \cite{cr}, we have 
		\begin{equation}\label{4.11}
			k_f^2= k_{v_i}k_{v_j}+k_{v_j}k_{v_l}+k_{v_i}k_{v_l}+1.
		\end{equation}
		Since $k_v^*\le k_v$ for each vertex $v$, by (\ref{4.11}), we have $k_f^*\le k_f$ for each face $f$. Then since $k_v^*\le k_v$ for each vertex $v$ and $k_f^*\le k_f$ for each face $f$, by (\ref{4.12}), we have $l_v^{f*}\ge l_v^f$ for every face $f$ and every vertex of $f$. This completes the proof of $(b)$ of Theorem \ref{thm6}.
		
		\begin{figure}[htbp]
			\centering
			\includegraphics[scale=0.3]{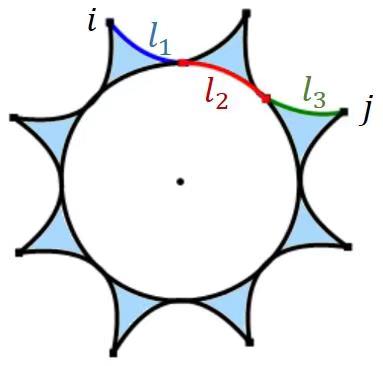}
			%\caption{B}
			\captionof{figure}{\small The shortest distance $d_{ij}$ between tangent points $i$ and $j$ is the sum of $l_1$, $l_2$ and $l_3$.}
			\label{12}
		\end{figure}
		
		We prove the third part of Theorem \ref{thm6}. For the surfaces $\hat{S}$ and $\hat{S}^{*}$, since the shortest distance between any two points on their boundary is the sum of the lengths of some arcs as shown in Figure \ref{12}, then by $(b)$ of Theorem \ref{thm6}, we have 
		$$
		d_{ij}^*=\sum l_v^{f*}\ge\sum l_v^f=d_{ij}. 
		$$
		This completes the proof of $(c)$ of Theorem \ref{thm6}.
	\end{proof}

	\section{Acknowledgments}
Guangming Hu is supported by NSF of China (No.12101275). Ziping Lei is supported by NSF of China (No. 12122119). 
 Yanlin Li is supported by NSF of China (No. 12101168).
 Hao Yu is supported by NSF of China (No. 12341102).

	\Addresses

\end{document}